\newif\ifsup\suptrue
\def\cS{\mathcal S}
\newcommand{\tinytodo}[2][]{\todo[size=\tiny]{#2}}
\newcommand{\todoa}[2][]{\tinytodo[color=red!20, #1]{A:\@#2}} 
\newcommand{\todot}[2][]{\tinytodo[color=blue!20, #1]{T: #2}} 
\newcommand{\finalchanges}[1]{\textcolor{black}{#1}}
\newcommand{\finalchangesEmilie}[1]{\textcolor{black}{#1}}
\newtheorem{theorem}{Theorem}
\newtheorem{lemma}{Lemma}
\newtheorem{proposition}{Proposition}
\def\N{\mathbb{N}}
\def\R{\mathbb{R}}
\def\P{\mathbb{P}}
\def\E{\mathbb{E}}
\def\1{\mathds{1}}
\def\cF{\mathcal{F}}
\def\kl{\mathrm{kl}}
\newcommand{\argmax}{\operatornamewithlimits{arg\,max}}
\newcommand{\argmin}{\operatornamewithlimits{arg\,min}}
\newcommand{\set}[1]{\left\{#1\right\}}
\newcommand{\ind}[1]{\1\set{#1}}
\def\subsubsect#1{\vspace{1ex plus 0.5ex minus 0.5ex}\noindent{\normalsize\textbf{#1.}}}
\newcommand{\baietc}{\text{\scalebox{0.8}{BAI-ETC}}}
\newcommand{\ucb}{\text{\scalebox{0.8}{UCB}}}
\let\epsilon\varepsilon
\title{On Explore-Then-Commit Strategies}
\author{
  Aurélien Garivier\thanks{This work was partially supported by the CIMI (Centre International de Math\'ematiques et d'{In\-for\-ma\-tique}) Excellence program while Emilie Kaufmann visited Toulouse in November 2015.
The authors acknowledge the support of the French Agence Nationale de la Recherche (ANR), under grants ANR-13-BS01-0005 (project SPADRO) and ANR-13-CORD-0020 (project ALICIA).} \\
  Institut de Math\'ematiques de Toulouse; UMR5219\\
  Universit\'e de Toulouse; CNRS\\
   UPS IMT, F-31062 Toulouse Cedex 9, France \\
  \texttt{aurelien.garivier@math.univ-toulouse.fr} \\
  \And  
  Emilie Kaufmann \\
  Univ. Lille, CNRS, Centrale Lille, Inria SequeL \\
  UMR 9189, CRIStAL - Centre de Recherche en Informatique Signal et Automatique de Lille\\
   F-59000 Lille, France\\
  \texttt{emilie.kaufmann@univ-lille1.fr} \\
  \AND
  Tor Lattimore \\
  University of Alberta \\
  116 St \& 85 Ave, Edmonton, AB T6G 2R3, Canada \\
  \texttt{tor.lattimore@gmail.com} \\
}
\begin{document}

\maketitle

\begin{abstract}
We study the problem of minimising regret in two-armed bandit problems with Gaussian rewards. Our
objective is to use this simple setting to illustrate that strategies based on an exploration phase (up to a stopping time) followed
by exploitation are necessarily suboptimal.
The results hold regardless of whether or not the difference in means between the two arms is known. 
Besides the main message, we also refine existing deviation inequalities, which allow us to design fully sequential strategies with finite-time regret 
guarantees that are (a) asymptotically optimal as the horizon grows and (b) order-optimal in the minimax sense. Furthermore
we provide empirical evidence that the theory also holds in practice and discuss extensions to non-gaussian and multiple-armed case.
\end{abstract}

\section{Introduction}

\finalchanges{It is now a very frequent issue for companies to optimise their daily profits by choosing between one of two possible website layouts.}
A natural approach is to start with a period of A/B Testing (exploration) during which the two versions are uniformly presented to users. Once
the testing is complete,
the company displays the version believed to generate the most profit for the rest of the \finalchanges{month} (exploitation).
The time spent exploring may be chosen adaptively based on past observations, but could also be fixed in advance. 
Our contribution is to show that strategies of this form are much worse
than if the company is allowed to dynamically select which website to display without restrictions for the whole month.

Our analysis focuses on a simple sequential decision problem played over $T$ time-steps.
In time-step $t \in 1,2,\ldots,T$ the agent chooses an action $A_t \in \set{1,2}$ and 
receives a normally distributed reward $Z_t \sim \mathcal N(\mu_{A_t}, 1)$ where $\mu_1, \mu_2 \in \R$ are the unknown mean rewards for actions $1$ and $2$ respectively.
The goal is to find a strategy $\pi$ (a way of choosing each action $A_t$ based on past observation) that maximises the cumulative reward over $T$ steps in 
expectation, or equivalently minimises the regret
\begin{align}
\label{eq:regret}
R^\pi_\mu(T) = T \max\set{\mu_1, \mu_2} - \E_\mu\left[\sum_{t=1}^T \mu_{A_t}\right]\,.
\end{align}
This framework is known as the multi-armed bandit problem, which has many applications and has been studied for almost a century \citep{Tho33}. 
Although this setting is now quite well understood,
the purpose of this article is to show that strategies based on distinct phases of exploration and exploitation are necessarily suboptimal. This is an 
important message because exploration followed by exploitation is the most natural approach and is often implemented in applications (including the 
website optimisation problem described above).
Moreover, strategies of this kind have been proposed in the literature for more complicated settings \citep{AO10,PerchetRigollet13Covariates,Perchet15Batched}.
Recent progress on optimal exploration policies (e.g., by \cite{GK16}) could have suggested that well-tuned variants of two-phase strategies might be near-optimal. 
We show, on the contrary, that optimal strategies for multi-armed bandit problems \emph{must} be fully-sequential, and in particular should mix exploration and exploitation. \finalchanges{It is known since the work of~\cite{Wald45SPRT} on simple hypothese testing that sequential procedures can lead to significant gains. Here, the superiority of fully sequential procedures is consistent with intuition: if one arm first appears to be better, but if subsequent observations are disappointing, the obligation to commit at some point can be restrictive. In this paper, we give a crisp and precise description of how restrictive it is: it leads to regret asympotically twice as large on average. The proof of this result combines some classical techniques of sequential analysis and of the bandit literature.}

We study two settings, one when the gap $\Delta = |\mu_1 - \mu_2|$ is known and the other when it is not.
The most straight-forward strategy in the former case is to explore each action a fixed number of times $n$ and subsequently
exploit by choosing the action that appeared best while exploring. It is easy to calculate the optimal $n$ and consequently show that this strategy
suffers a regret of $R^\pi_\mu(T) \sim 4 \log(T) / \Delta$.
A more general approach is to use a so-called \emph{Explore-Then-Commit} (ETC) strategy, following a nomenclature introduced by \cite{Perchet15Batched}.
An ETC strategy explores each action alternately until some data-dependent stopping time and subsequently commits to a single action
for the remaining time-steps.
We show in Theorem~\ref{th:SPRT:DeltaKnown} that by using a 
sequential probability ratio test (SPRT) 
it is possible to design an ETC strategy for which $R^\pi_\mu(T) \sim \log(T)/\Delta$, which improves on the above result by a factor of $4$.
%
%
We also prove a lower bound showing that no ETC strategy can improve on this result.
Surprisingly it is possible to do even better by using a fully sequential strategy inspired by the UCB algorithm for multi-armed bandits \citep{KR95}. 
We design a new strategy for which $R^\pi_\mu(T) \sim \log(T)/(2\Delta)$, which improves on the fixed-design strategy by a factor of $8$ and on 
SPRT by a factor of $2$. Again we prove a lower bound showing that no strategy can improve on this result. 

For the case where $\Delta$ is unknown, fixed-design strategies are hopeless because there is no reasonable tuning for the exploration budget $n$. 
However, it is possible to design an ETC strategy for unknown gaps. Our approach uses a modified fixed-budget best arm 
identification (BAI) algorithm in its exploration phase (see e.g., \cite{EvenDaral06,GK16}) and chooses the recommended arm for
the remaining time-steps. In Theorem~\ref{th:optimBAI:DeltaUnKnown} we show that a strategy based on this idea satisfies
$R^\pi_\mu(T) \sim 4 \log(T) / \Delta$, which again we show is optimal within the class of ETC strategies.
As before, strategies based on ETC are suboptimal by a factor of $2$ relative to the optimal rates achieved by fully sequential strategies such as 
UCB, which satisfies $R_\mu^\pi(T) \sim 2\log(T)/\Delta$ \citep{KR95}.

In a nutshell, strategies based on fixed-design or ETC are necessarily suboptimal.
That this failure occurs even in the simple setting considered here is a strong indicator that they are suboptimal in more complicated settings.
Our main contribution, presented in more details in Section~\ref{sec:notation}, is to fully characterise the achievable asymptotic regret when $\Delta$ is 
either known or unknown and the strategies are either fixed-design, ETC or fully sequential. All upper bounds have explicit finite-time forms, which allow us to 
derive optimal minimax guarantees. For the lower bounds we give a novel and generic proof of all results.
All proofs contain new, original ideas that we believe are fundamental to the understanding of sequential analysis. 


\section{Notation and Summary of Results}\label{sec:notation}

\newcommand{\ETC}{\ensuremath{\Pi_{\text{ETC}}}}
\newcommand{\DETC}{\ensuremath{\Pi_{\text{DETC}}}}
\newcommand{\ALL}{\ensuremath{\Pi_{\text{ALL}}}}
\newcommand{\Hall}{\ensuremath{\mathcal H}}
\newcommand{\HDelta}{\ensuremath{\mathcal H_{\Delta}}}
\newcommand{\Reg}{\ensuremath{\mathrm{R}_{\mu}(T,\pi)}}
\newcommand{\RegT}{\ensuremath{\mathrm{R}_{\mu}(T,\pi_T)}}

We assume that the horizon $T$ is known to the agent. 
The optimal action is $a^* = \argmax(\mu_1,\mu_2)$, its mean reward is $\mu^* = \mu_{a^*}$, and the gap between the means is $\Delta=|\mu_1-\mu_2|$.
Let $\mathcal H = \R^2$ be the set of all possible pairs of means, and $\smash{\mathcal H_{\Delta} = \set{\mu \in \R^2 : |\mu_1 - \mu_2| = \Delta}}$. 
For $i \in \set{1,2}$ and $n \in \N$ let $\hat \mu_{i,n}$ be the empirical mean of the $i$th action based on the first $n$ samples.
Let $A_t$ be the action chosen in time-step $t$ and $\smash{N_i(t)=\sum_{s=1}^t\ind{A_{s}=i}}$ be the number of times 
the $i$th action has been chosen after time-step $t$. We denote by $\hat \mu_i(t) = \hat \mu_{i,N_i(t)}$ the empirical mean of the $i$th arm after time-step $t$.

A strategy is denoted by $\pi$, which is a function from past actions/rewards to a distribution over the next actions. An ETC strategy is governed by a 
sampling rule (which determines which arm to sample at each step), a stopping rule (which specifies when to stop the exploration phase) and a 
decision rule indicating which arm is chosen in the exploitation phase. As we consider two-armed, Gaussian bandits with equal variances, we focus here on 
uniform sampling rules, which have been shown in~\cite{COLT14} to be optimal in that setting. For this reason, we define an ETC strategy as a pair $(\tau, \hat a)$, where $\tau$ is an even stopping time with respect to the 
filtration $(\mathcal{F}_t=\sigma(Z_1,\dots, Z_t))_t$ and $\hat a \in \set{1,2}$ is $\mathcal{F}_\tau$-measurable. In all the 
ETC strategies presented in this paper, the stopping time $\tau$ depends on the horizon $T$ (although this is not reflected in the notation). 
At time $t$, the action picked by the ETC strategy is
$A_t = \begin{cases}
1 & \text{if } t \leq \tau \text{ and } t \text{ is odd}\;, \\
2 & \text{if } t \leq \tau \text{ and } t \text{ is even}\;, \\
\hat a & \text{otherwise}\;.
\end{cases}$

\vspace{-0.2cm}
The regret for strategy $\pi$, given  in Eq.\ (\ref{eq:regret}), depends on $T$ and $\mu$.
Assuming, for example that $\mu_1 =\mu_2 + \Delta$, then an ETC strategy $\pi$ chooses the suboptimal
arm $N_2(T) = \frac{\tau \wedge T}{2} + (T - \tau)_+ \ind{\hat{a} = 2}$ times, and the regret $R_\mu^\pi(T) = \Delta \E_\mu [N_2(T)]$ thus satisfies
\begin{align}
\Delta \E_\mu[(\tau\wedge T)/2] \leq R_\mu^\pi(T) \leq ({\Delta}/2) \E_\mu [\tau\wedge T] + \Delta T\; \P_\mu(\tau \leq T, \hat{a} \neq a^*)\,.
\label{regretETC}
\end{align}
We denote the set of all ETC strategies by \ETC. 
A fixed-design strategy is and ETC strategy for which there exists an integer $n$ such that $\tau = 2n$ almost surely, and the set of all such strategies is denoted by \DETC. 
The set of all strategies is denoted by \ALL.
For $\mathcal S \in\{\Hall, \HDelta\}$, we are interested in strategies $\pi$ that are uniformly efficient on $\mathcal S$, in the sense that
\begin{equation}\forall \mu \in \cS, \forall \alpha >0, \  {R_\mu^\pi(T)} = o(T^\alpha).\label{UniformEfficiency}\end{equation}

\begin{wrapfigure}[5]{r}{5cm}
\vspace{-0.6cm}
\begin{center}
\renewcommand{\arraystretch}{1.4}
\begin{tabular}{|llll|}
\hline 
          & \ALL & \ETC & \DETC \\ \hline
\Hall     & 2    & 4    & NA    \\
\HDelta   & 1/2  & 1    & 4 \\ \hline
\end{tabular}
\end{center}
\end{wrapfigure}
We show in this paper that any uniformly efficient strategy in $\Pi$ has a regret at least equal to $C_{\mathcal{S}}^\Pi\log(T)/|\mu_1-\mu_2| (1-o_T(1))$ 
for every parameter $\mu\in\mathcal{S}$, where $C_\mathcal{S}^\Pi$ is given in the adjacent table.
Furthermore, we prove that these results are tight. In each case, we propose a uniformly efficient strategy matching this bound. In addition, we prove a tight and non-asymptotic regret bound which also implies, in particular, minimax rate-optimality.

The paper is organised as follows. First we consider ETC and fixed-design strategies when $\Delta$ known and unknown (Section~\ref{sec:ETC}).
We then analyse fully sequential strategies that interleave exploration and exploitation in an optimal way (Section~\ref{sec:general}). For known $\Delta$ 
we present a novel algorithm that exploits the additional information to improve the regret. For unknown $\Delta$ we briefly recall the well-known results, 
but also propose a new regret analysis of the UCB* algorithm, a variant of UCB that can be traced back to \cite{Lai87}, for which we also obtain order-optimal minimax regret.
\finalchanges{
Numerical experiments illustrate and empirically support our results in Section~\ref{sec:numerical}.}
We conclude with a short discussion on non-uniform exploration, and on models with more than $2$ arms, possibly non Gaussian. 
\finalchanges{All the proofs are given in the supplementary material. In particular, our simple, unified proof for all the lower bounds is given in Appendix~\ref{sec:proofs:LB}. }

\section{Explore-Then-Commit Strategies}\label{sec:ETC}

\textbf{Fixed Design Strategies for Known Gaps.\ }
As a warm-up we start with the fixed-design ETC setting where $\Delta$ is known and where the agent chooses each action $n$ times
before committing for the remainder.

\begin{wrapfigure}[11]{r}{6.2cm}
	\vspace{-0.6cm}
	\hspace{0.2cm}
	\begin{minipage}{5.7cm}
		\begin{algorithm}[H]
			\begin{algorithmic}
				\State {\bf input:} $T$ and $\Delta$
				\State $n:=\Big\lceil 2 W\big(T^2\Delta^4/(32\pi)\big)/\Delta^2 \Big\rceil$
				\For{$k\in\{1,\dots,n\}$}
				\State choose $A_{2k-1} = 1$ and $A_{2k}=2$
				\EndFor
				\State $\hat{a} := \argmax_{i} \hat{\mu}_{i,n}$
				\For{$t\in\{2n+1,\dots,T\}$}
				\State choose $A_t=\hat{a}$
				\EndFor
			\end{algorithmic}
			\caption{FB-ETC algorithm}\label{alg:FBETC}
		\end{algorithm}
	\end{minipage}
\end{wrapfigure}
The optimal decision rule is obviously $\hat a = \argmax_i \hat \mu_{i,n}$ with ties broken arbitrarily. The formal description of the strategy is given in Algorithm~\ref{alg:FBETC}, where $W$ denotes the Lambert function implicitly defined for $y>0$ by $W(y) \exp(W(y)) = y$.
We denote the regret associated to the choice of $n$ by $R_\mu^{n}(T)$.
The following theorem is not especially remarkable except that the bound is sufficiently refined to show certain negative lower-order terms that would
otherwise not be apparent.

\begin{theorem}\label{th:FB:DeltaKnown}
Let $\mu\in\HDelta$, and let 
\[\overline{n}=\left\lceil \frac{2}{\Delta^2}W\left(\frac{T^2\Delta^4}{32\pi}\right) \right\rceil\;. \quad \text{ Then} \quad 
R_\mu^{\overline{n}}(T)\leq\frac{4}{\Delta}\log\left(\frac{T\Delta^2}{4.46}\right) - \frac{2}{\Delta}\log\log\left(\frac{T\Delta^2}{4\sqrt{2\pi}}\right)+ \Delta\]
whenever $T\Delta^2 > 4\sqrt{2\pi e}$, and $R_\mu^{\overline{n}}(T)\leq T\Delta/2 + \Delta$ otherwise.
In all cases, $R_\mu^{\overline{n}}(T)\leq 2.04\sqrt{T}+\Delta$.
Furthermore, for all $\epsilon>0, T\geq 1$ and $n\leq 4(1-\epsilon)\log(T)/\Delta^2$,
\[R_\mu^n(T)\geq \left( 1 - \frac{2}{n\Delta^2} \right) \left(1-\frac{8\log(T)}{\Delta^2T}\right) \frac{\Delta T^\epsilon}{2\sqrt{\pi\log(T)}} \;.\]
As $R_\mu^n(T)\geq n\Delta$, this entails that
$\displaystyle\inf_{1\leq n\leq T} R_\mu^n(T) \sim 4\log(T)/\Delta$.
\end{theorem}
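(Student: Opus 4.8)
The plan is to reduce everything to a sharp analysis of the exact regret of a fixed-design strategy. Assuming without loss of generality that arm $1$ is optimal ($\mu_1=\mu_2+\Delta$), a budget-$n$ strategy plays arm $2$ exactly $n$ times on $\{1,\dots,2n\}$ and, when $T>2n$, a further $T-2n$ times precisely when $\hat a=\argmax_i\hat\mu_{i,n}=2$, so $N_2(T)=n+(T-2n)_+\ind{\hat a=2}$ for $T\ge 2n$ and $N_2(T)=\lfloor T/2\rfloor$ otherwise. Since $\hat\mu_{2,n}-\hat\mu_{1,n}\sim\mathcal N(-\Delta,2/n)$, one has $\P_\mu(\hat a=2)=\overline\Phi(\Delta\sqrt{n/2})$ with $\overline\Phi(x):=\P(\mathcal N(0,1)>x)$, hence for $T\ge 2n$
\[
R_\mu^n(T)=\Delta n+\Delta(T-2n)\,\overline\Phi\!\big(\Delta\sqrt{n/2}\big)\;,
\]
and $R_\mu^n(T)=\Delta\lfloor T/2\rfloor$ if $T<2n$; from $\overline\Phi\le\tfrac12$ this already gives the crude bound $R_\mu^n(T)\le T\Delta/2$ valid for every $n$. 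All four assertions of the theorem then follow from estimating this formula, at $n=\overline n$ for the upper bounds and uniformly in $n$ for the lower bound.

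For the upper bound I would set $y=T^2\Delta^4/(32\pi)$, so that $\overline n\ge 2W(y)/\Delta^2$ and $\Delta\sqrt{\overline n/2}\ge\sqrt{W(y)}$. Because $\overline\Phi$ is decreasing, applying the tail inequality $\overline\Phi(x)\le e^{-x^2/2}/(x\sqrt{2\pi})$ at $x=\sqrt{W(y)}$ and using the defining identity $e^{W(y)}=y/W(y)$ yields $\overline\Phi(\Delta\sqrt{\overline n/2})\le 1/\sqrt{2\pi y}$; bounding $(T-2\overline n)_+\le T$ turns the exploitation term into $\Delta T/\sqrt{2\pi y}=4/\Delta$ by the choice of $y$, while the exploration term is at most $\Delta\overline n\le 2W(y)/\Delta+\Delta$ (the $+\Delta$ being the price of the ceiling; if instead $T<2\overline n$ the regret $\Delta\lfloor T/2\rfloor\le\Delta\overline n$ is only smaller). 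This already establishes, unconditionally, $R_\mu^{\overline n}(T)\le (2W(y)+4)/\Delta+\Delta$. When $T\Delta^2>4\sqrt{2\pi e}$, equivalently $y>e$, I would insert the standard estimate $W(y)\le\log y-\log\log y+(\log\log y)/\log y$ with $\log y=2\log\!\big(T\Delta^2/(4\sqrt{2\pi})\big)$ and bound the residual constants to get the stated $\log$–$\log\log$ expression; when $T\Delta^2\le 4\sqrt{2\pi e}$ the ``otherwise'' bound is just $R_\mu^{\overline n}(T)\le T\Delta/2\le T\Delta/2+\Delta$, and the same inequality gives $R_\mu^{\overline n}(T)\le\tfrac12(4\sqrt{2\pi e}\,T)^{1/2}<2.04\sqrt T$ in that range. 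In the complementary range $y>e$ one has $W(y)\le\log y$, so $(2W(y)+4)/\Delta\le 4\sqrt T\,(\log(T\Delta^2/(4\sqrt{2\pi}))+1)/\sqrt{T\Delta^2}$, an elementary function of $T\Delta^2$ whose maximum is below $2.04\sqrt T$, which yields the minimax bound.

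For the lower bound I would fix $\epsilon>0$ and $n\le 4(1-\epsilon)\log(T)/\Delta^2$; if $T<2n$ then $\Delta^2T<8(1-\epsilon)\log T$ and the claimed bound is vacuous, so assume $T\ge 2n$. Discarding the nonnegative exploration term in the exact formula and applying the lower tail bound $\overline\Phi(x)\ge\frac{e^{-x^2/2}}{x\sqrt{2\pi}}(1-x^{-2})$ at $x=\Delta\sqrt{n/2}$ (so $x^{-2}=2/(n\Delta^2)$) gives
\[
R_\mu^n(T)\ge\frac{(T-2n)\,e^{-n\Delta^2/4}}{\sqrt{\pi n}}\left(1-\frac{2}{n\Delta^2}\right)\;.
\]
Then $n\le 4(1-\epsilon)\log(T)/\Delta^2$ delivers $e^{-n\Delta^2/4}\ge T^{\epsilon-1}$, $1/\sqrt n\ge\Delta/(2\sqrt{\log T})$ and $T-2n\ge T(1-8\log T/(\Delta^2T))$, and substituting these three inequalities produces exactly the asserted lower bound.

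The concluding equivalence then follows by combining the two. The upper bound at $\overline n$ gives $\inf_{1\le n\le T}R_\mu^n(T)\le(2W(y)+4)/\Delta+\Delta\sim 4\log(T)/\Delta$, since $W(y)\sim\log y\sim 2\log T$ for fixed $\Delta$. Conversely, for any fixed $\epsilon>0$, every $n\ge 4(1-\epsilon)\log(T)/\Delta^2$ has $R_\mu^n(T)\ge\Delta n\ge 4(1-\epsilon)\log(T)/\Delta$, and every smaller $n$ has, by the displayed lower bound, $R_\mu^n(T)\ge(1-o_T(1))\Delta T^\epsilon/(2\sqrt{\pi\log T})$, which for large $T$ exceeds $4\log(T)/\Delta$ (the finitely many $n\le 2/\Delta^2$ where $1-2/(n\Delta^2)$ is nonpositive are harmless, since there the error probability is bounded below by a constant and $R_\mu^n(T)$ grows linearly in $T$); letting $\epsilon\to0$ then gives $\inf_nR_\mu^n(T)\sim 4\log(T)/\Delta$. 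I expect the only real difficulty to be the last stage of the upper bound: extracting the explicit constants ($4.46$ and $2.04$) together with the negative $\log\log$ term requires a sufficiently tight estimate of the Lambert function and careful control of the lower-order terms—precisely the refinement the statement advertises.
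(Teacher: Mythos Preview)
Your argument is correct and follows essentially the same route as the paper: the exact regret identity $R_\mu^n(T)=\Delta n+\Delta(T-2n)\overline\Phi(\Delta\sqrt{n/2})$, the Mills-ratio upper and lower tail bounds, a Lambert-$W$ estimate to obtain the $\log$/$\log\log$ upper bound, the range split at $T\Delta^2=4\sqrt{2\pi e}$ for the minimax bound, and the same two-regime argument for the asymptotic equivalence. The only notable difference is the specific Lambert inequality used to extract the constant $4.46$: the paper applies $W(y)\le\log\big((1+e^{-1})y/\log y\big)$ for $y\ge e$ (Hoorfar--Hassani), which absorbs the $+4/\Delta$ term and produces $4.46$ directly, whereas your expansion $W(y)\le\log y-\log\log y+(\log\log y)/\log y$ leaves a positive residual and yields a slightly worse constant---exactly the ``real difficulty'' you flagged at the end.
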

The proof of Theorem~\ref{th:FB:DeltaKnown} is in
\ifsup
Appendix~\ref{proof:th:FB:DeltaKnown}.
\else
the supplementary material.
\fi
Note that the "asymptotic lower bound" $4\log(T)/\Delta$ is actually not a lower bound, even up to an additive constant: $R_\mu^{\overline{n}}(T)-4\log(T)/\Delta \to-\infty$ when $T\to\infty$. Actually, the same phenomenon applies many other cases, and it should be no surprise that, in numerical experiments, some algorithm reach a regret smaller than Lai and Robbins asymptotic lower bound, as was already observed in several articles~(see e.g. \cite{GMS16}).
Also note that the term $\Delta$ at the end of the upper bound is necessary: if $\Delta$ is large, the problem is statistically so  simple that one single observation is sufficient to identify the best arm; but that observation cannot be avoided. 

\subsubsect{Explore-Then-Commit Strategies for Known Gaps}
We now show the existence of ETC strategies that improve on the optimal fixed-design strategy.
Surprisingly, the gain is significant.
We describe an algorithm inspired by ideas from hypothesis testing and prove an upper bound on its regret that is minimax optimal and that asymptotically matches our lower bound.

Let $P$ be the law of $X-Y$, where $X$ (resp. $Y$) is a reward from arm $1$ (resp. arm $2$). As $\Delta$ is known, the exploration phase of an ETC algorithm can be viewed as a statistical test of the hypothesis $H_1 :( P = \mathcal N (\Delta,2))$ against  $H_2: (P =\mathcal N (-\Delta,2))$. The work of~\cite{Wald45SPRT} shows that a significant gain in terms of expected number of samples can be obtained by using a sequential rather than a batch test.
Indeed, for a batch test, a sample size of $n \sim (4/\Delta^2)\log(1/\delta)$ is necessary to guarantee that both type I and type II errors are upper bounded by $\delta$. In contrast, when a random number of samples is permitted, there exists a sequential probability ratio test (SPRT) with the same guarantees that stops after a random number $N$ of samples with expectation $\E[N] \sim \log(1/\delta)/\Delta^2$ under both $H_1$ and $H_2$.
The SPRT stops when the absolute value of the log-likelihood ratio between $H_1$ and $H_2$ exceeds some threshold. 
Asymptotic upper bound on the expected number of samples used by a SPRT, as well as the (asymptotic) optimality of such 
procedures among the class of all sequential tests can be found in \citep{Wald45SPRT,Siegmund:SeqAn}. 

\begin{wrapfigure}[11]{r}{7cm}
\vspace{-0.95cm}
\begin{minipage}{7cm}
\begin{algorithm}[H]
\begin{algorithmic}
\State {\bf input:} $T$ and $\Delta$
\State $A_1=1, A_2=2$, $t:=2$
\While{$(t/2)\Delta\left|\hat{\mu}_1(t)-\hat{\mu}_2(t)\right| < \log\big(T\Delta^2\big)$}
    \State choose $A_{t+1} = 1$ and $A_{t+2}=2$, 
    \State $t:=t+2$
\EndWhile
\State $\hat{a} := \argmax_{i} \hat{\mu}_i(t)$
\While{$t\leq T$}
    \State choose $A_t=\hat{a}$,  
    \State $t:=t+1$
\EndWhile
\end{algorithmic}
\caption{SPRT ETC algorithm}\label{alg:SPRT}
\end{algorithm}
\end{minipage}
\end{wrapfigure}
Algorithm~\ref{alg:SPRT} is an ETC strategy that explores each action alternately, halting when sufficient confidence is reached according to a SPRT.
The threshold depends on the gap $\Delta$ and the horizon $T$ corresponding to a risk of $\delta = 1/(T\Delta^2)$. 
The exploration phase ends at the stopping time
\[ \tau = \inf\Big\{ t=2n : \big|\hat{\mu}_{1,n}  - \hat{\mu}_{2,n} \big| \geq \frac{\log(T\Delta^2)}{n\Delta}\Big\}.\]
If $\tau<T$ then the empirical best arm $\hat{a}$ at time $\tau$ is played until time $T$. If $T\Delta^2\leq 1$, then $\tau = 1$ 
(one could even define $\tau=0$ and pick a random arm).
The following theorem gives a non-asymptotic upper bound on the regret of the algorithm. The results rely  
on non-asymptotic upper bounds on the expectation of $\tau$, which are interesting in their own right.

\newcommand{\sprtetc}{\text{\scalebox{0.8}{\rm SPRT-ETC}}}
\begin{theorem}\label{th:SPRT:DeltaKnown}
If $T\Delta^2\geq 1$, then the regret of the SPRT-ETC algorithm is upper-bounded as 
\begin{align*}
R_\mu^{\sprtetc}(T)\leq  \frac{\log(eT\Delta^2)}{\Delta} + \frac{4\sqrt{\log(T\Delta^2)}+4}{\Delta} + \Delta\,.
\end{align*}
Otherwise it is upper bounded by $T\Delta/2+\Delta$, and for all $T$ and $\Delta$ the regret is less than $10\sqrt{T/e} + \Delta$.
\end{theorem}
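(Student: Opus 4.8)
The plan is to bound the regret via the sandwich inequality~\eqref{regretETC}, which reduces everything to two tasks: controlling $\E_\mu[\tau\wedge T]$ and controlling the misidentification probability $\P_\mu(\tau\le T,\hat a\ne a^*)$. By the structure of the SPRT stopping rule, the threshold is chosen so that $\delta = 1/(T\Delta^2)$; if we can show $\P_\mu(\tau\le T,\hat a\ne a^*)\le c\,\delta$ for a small constant $c$, then the term $\Delta T\,\P_\mu(\tau\le T,\hat a\ne a^*)$ in~\eqref{regretETC} is $O(1/\Delta)$ and folds into the lower-order terms. The dominant contribution is then $(\Delta/2)\E_\mu[\tau\wedge T]$, so the main work is to prove $\E_\mu[\tau\wedge T]\le 2\log(eT\Delta^2)/\Delta^2 + O(\sqrt{\log(T\Delta^2)}/\Delta^2)$, which after multiplying by $\Delta/2$ gives exactly the claimed leading term $\log(eT\Delta^2)/\Delta$.

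To bound $\E_\mu[\tau]$, assume without loss of generality $\mu_1 = \mu_2+\Delta$, and write $\hat\mu_{1,n}-\hat\mu_{2,n} = \Delta + S_n/n$ where $S_n$ is a sum of $n$ i.i.d.\ centered Gaussians with variance $2$ (the increments $X_k-Y_k-\Delta$). The stopping condition $n\Delta|\hat\mu_{1,n}-\hat\mu_{2,n}|\ge \log(T\Delta^2)$ becomes $|\,n\Delta^2 + \Delta S_n\,|\ge \log(T\Delta^2)$. On the "good" event this is driven by the deterministic drift $n\Delta^2$: as soon as $n\Delta^2$ exceeds $\log(T\Delta^2)$ by a margin that dominates the typical fluctuation $\Delta|S_n| \approx \Delta\sqrt{2n}$, the test stops. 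Setting $n_0 \approx \log(T\Delta^2)/\Delta^2$, I would use $\E_\mu[\tau] = 2\sum_{n\ge 0}\P_\mu(\tau > 2n)$, split the sum at (a slightly inflated) $n_0$, bound the head trivially by $2n_0$, and for the tail $n > n_0 + k$ note that $\tau > 2n$ forces $|n\Delta^2 + \Delta S_n| < \log(T\Delta^2)$, i.e.\ $-\Delta S_n > n\Delta^2 - \log(T\Delta^2) \gtrsim k\Delta^2$, so $\P_\mu(\tau>2n)\le \P(S_n < -k\Delta/1)\le \exp(-k^2\Delta^2/(4n))$ by the Gaussian tail; summing this geometric-like series contributes the $O(\sqrt{\log(T\Delta^2)}/\Delta^2)$ correction. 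This is where the refined deviation inequalities advertised in the introduction enter: a crude union bound over $n$ would lose a $\log$ factor, so one needs a maximal inequality (a peeling/Doob argument on the martingale $S_n$, or the time-uniform bound of the kind that appears in the appendix) to get the clean $\sqrt{\log}$ term rather than $\log$.

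For the error probability, $\hat a\ne a^* = 1$ together with $\tau = 2n\le T$ means $\hat\mu_{1,n}<\hat\mu_{2,n}$ and $n\Delta|\hat\mu_{1,n}-\hat\mu_{2,n}|\ge\log(T\Delta^2)$, i.e.\ $n\Delta^2 + \Delta S_n \le -\log(T\Delta^2)$, hence $-\Delta S_n \ge n\Delta^2 + \log(T\Delta^2) \ge \log(T\Delta^2)$. Again by a maximal version of the Gaussian tail bound applied to the stopped walk — this is precisely Wald-type / likelihood-ratio reasoning — one gets $\P_\mu(\tau\le T,\hat a\ne a^*)\le \exp(-\log(T\Delta^2)) = 1/(T\Delta^2)$, possibly up to a constant from the peeling. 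Then $\Delta T\,\P_\mu(\tau\le T,\hat a\ne a^*)\le 1/\Delta$, which is absorbed into the stated $O(1/\Delta)$ terms. The cases $T\Delta^2<1$ and the minimax bound $10\sqrt{T/e}+\Delta$ follow by elementary bookkeeping: when $T\Delta^2<1$ the algorithm sets $\tau=1$ (or $0$) and the regret is at most $T\Delta/2+\Delta$ by~\eqref{regretETC}; for the minimax bound one optimizes $\max\{T\Delta/2,\ \log(eT\Delta^2)/\Delta + \dots\}$ over $\Delta$, the worst case being $\Delta$ of order $1/\sqrt T$ up to logarithmic corrections, and checking the constant $10/\sqrt e$ works uniformly. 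The main obstacle is getting the constant in the leading $\log(eT\Delta^2)/\Delta$ term exactly right — i.e.\ ensuring the overshoot of the SPRT and the peeling corrections genuinely live in the lower-order $\sqrt{\log}$ term and do not inflate the leading coefficient — which is exactly the point of using a sharpened maximal deviation inequality rather than a naive bound.
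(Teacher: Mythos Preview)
Your overall structure is right and matches the paper: decompose via~\eqref{regretETC}, bound $\E_\mu[\tau]$ by a tail sum using pointwise Gaussian bounds, and bound $\P_\mu(\tau\le T,\hat a\ne a^*)$ by a likelihood-ratio martingale argument. But you have one genuine misconception and one place where you are too vague.

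\textbf{No maximal inequality is needed for $\E_\mu[\tau]$.} Your first description is already correct: $\{\tau>2n\}\subset\{|n\Delta^2+\Delta S_n|<\log(T\Delta^2)\}$ is a \emph{single-time} event, so the pointwise Gaussian tail bound at time $n$ suffices. Summing $\P_\mu(\tau>2n)$ over $n$ is not a union bound; it is just $\E_\mu[\tau/2]$. In the paper, one sets $n_u=(\log(T\Delta^2)+u)/\Delta^2$, gets $\P_\mu(\tau/2\ge \lceil n_u\rceil)\le\exp\bigl(-u^2/(4(\log(T\Delta^2)+u))\bigr)$ from the ordinary Gaussian tail, and integrates from $u=2\sqrt{\log(T\Delta^2)}$. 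That integral gives the $4\sqrt{\log(T\Delta^2)}+4$ term directly; there is no peeling and no log-factor to fight. The ``refined deviation inequalities'' of the appendix (Lemma~\ref{lem:conc}) are used for the other theorems, not for this one.

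\textbf{The error probability is exactly $1/(T\Delta^2)$, via optional stopping.} Your sketch ``maximal Gaussian tail\ldots possibly up to a constant from the peeling'' is not what happens. Writing $S_n=\sum_{k\le n}(X_k-Y_k)$ (uncentred, so $\E_\mu[\exp(-\Delta(X_1-Y_1))]=1$), the process $M_n=\exp(-\Delta S_n)$ is a nonnegative martingale. If $\hat a\ne a^*$ and $\tau\le T$ then at the stopping time one has $S_\cdot\le -\log(T\Delta^2)/\Delta$, hence $M_\cdot\ge T\Delta^2$. Doob's optional stopping on the bounded stopping time $\tau_2=T\wedge\inf\{n:S_n\le -\log(T\Delta^2)/\Delta\}$ gives $\E_\mu[M_{\tau_2}]=1$, so $\P_\mu(\tau\le T,\hat a\ne a^*)\le 1/(T\Delta^2)$ with no extra constant. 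This contributes exactly $1/\Delta$ to the regret, which combined with the ``$+1$'' from rounding in $\E_\mu[\tau/2]$ turns $\log(T\Delta^2)/\Delta$ into $\log(eT\Delta^2)/\Delta$.

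The case $T\Delta^2<1$ and the uniform $10\sqrt{T/e}+\Delta$ bound are indeed bookkeeping; for the latter the paper maximises the bound over $\Delta$ and finds the worst case at $\Delta=\sqrt{e/T}$.
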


The proof of Theorem~\ref{th:SPRT:DeltaKnown} is given in
\ifsup
Appendix~\ref{proof:th:SPRT:DeltaKnown}.
\else
the supplementary material.
\fi
The following lower bound shows that no uniformly efficient ETC strategy can improve on the asymptotic regret of Algorithm~\ref{alg:SPRT}. The proof is given
in Section~\ref{sec:proofs:LB} together with the other lower bounds.

\begin{theorem} \label{th:lowerBoundEC:DeltaKnown} Let $\pi$ be an ETC strategy that is uniformly efficient on $\HDelta$. Then for all $\mu \in \HDelta$, 
\[\liminf_{T\to\infty} \frac{R^\pi_\mu(T)}{\log(T)}\geq \frac{1}{\Delta}\;.\]
\end{theorem}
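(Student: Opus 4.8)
The plan is to reduce the statement to a lower bound on the expected length of the exploration phase, and then to prove that bound by a standard change-of-measure argument; the only point requiring care is the choice of the competing instance. By the symmetry of the problem we may assume $\mu_1=\mu_2+\Delta$, so $a^\ast=1$. The left inequality in \eqref{regretETC} gives $R_\mu^\pi(T)\geq(\Delta/2)\,\E_\mu[\tau\wedge T]$, so it suffices to show $\liminf_{T}\E_\mu[\tau\wedge T]/\log(T)\geq 2/\Delta^2$. I would compare $\mu$ with the \emph{symmetric} alternative $\mu'=(\mu_1-\Delta,\ \mu_2+\Delta)$, which again lies in $\HDelta$ but under which arm $2$ is optimal. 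This choice matters: shifting both means by $\Delta$ costs a per-step Kullback--Leibler divergence of only $\Delta^2/2$ on each arm, whereas the other natural way to flip the identity of the best arm inside $\HDelta$ — moving a single mean by $2\Delta$ — would cost $2\Delta^2$ on half the pulls and lose a factor $2$ in the final constant. (By contrast, for a fully sequential strategy one is forced to perturb only the rarely-pulled arm, which is precisely where the extra factor $2$ between the $\ETC$ and $\ALL$ bounds comes from.)

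Next, set $\sigma=\tau\wedge(T/2)$, a bounded stopping time, and consider the event $B=\set{\tau\leq T/2,\ \hat a=1}$, which is $\cF_\sigma$-measurable because $\hat a$ is $\cF_s$-measurable on $\set{\tau=s}$ and the truncation at $T/2$ keeps $B$ well defined when $\tau$ is large. On one hand, the two halves of the identity behind \eqref{regretETC} under $\mu$ — namely $R_\mu^\pi(T)\geq(\Delta/2)\,\E_\mu[\tau\wedge T]$ and $R_\mu^\pi(T)\geq\Delta\,\E_\mu[(T-\tau)_+\ind{\hat a=2}]$ — together with Markov's inequality force $\P_\mu(\tau>T/2)=o(1)$ and $\P_\mu(\tau\leq T/2,\hat a=2)=o(1)$, hence $\P_\mu(B)=1-o(1)$; the same computation under $\mu'$ yields $\P_{\mu'}(B)\leq 2R_{\mu'}^\pi(T)/(\Delta T)=o(T^{\alpha-1})$ for every $\alpha>0$ by uniform efficiency on $\HDelta$. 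Combining the data-processing inequality with the elementary bound $\kl(p,q)\geq p\log(1/q)-\log 2$ then gives $\mathrm{KL}\big(\P_\mu^{\cF_\sigma},\P_{\mu'}^{\cF_\sigma}\big)\geq\kl\big(\P_\mu(B),\P_{\mu'}(B)\big)\geq(1-o(1))(1-\alpha)\log(T)$. On the other hand, the Wald-type identity (chain rule) for the log-likelihood ratio of the stopped sequence of observations — legitimate because $\sigma$ is bounded and because on $\set{t\leq\sigma}$ the action $A_t$ is the deterministic exploration arm, contributing exactly $\Delta^2/2$ under both $\mu$ and $\mu'$ — gives $\mathrm{KL}\big(\P_\mu^{\cF_\sigma},\P_{\mu'}^{\cF_\sigma}\big)=(\Delta^2/2)\,\E_\mu[\sigma]\leq(\Delta^2/2)\,\E_\mu[\tau\wedge T]$.

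Putting the two bounds together yields $\E_\mu[\tau\wedge T]\geq(1-o(1))(1-\alpha)\,2\log(T)/\Delta^2$, hence $R_\mu^\pi(T)\geq(1-o(1))(1-\alpha)\log(T)/\Delta$; letting $\alpha\downarrow 0$ gives the claim. I expect the only genuine work to be (i) verifying that $B$ is $\cF_\sigma$-measurable for the truncated stopping time and that the relevant error probabilities are handled correctly through \eqref{regretETC}, and (ii) stating cleanly, and checking the hypotheses of, the chain-rule/Wald identity for the KL divergence of the two laws restricted to $\cF_\sigma$ evaluated at the stopped time; once the symmetric alternative is chosen, the rest is routine bookkeeping. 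I would guess the paper's "unified" proof packages (i)--(ii) into a single generic lemma that is then instantiated for each entry of the table.
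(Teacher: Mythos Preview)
Your proof is correct and follows essentially the same route as the paper's: the same symmetric alternative $\mu'=(\mu_2,\mu_1)\in\HDelta$, the same reduction via \eqref{regretETC} to a lower bound on $\E_\mu[\tau\wedge T]$, and the same change-of-measure/data-processing argument at the (truncated) exploration stopping time. The only cosmetic differences are that the paper invokes a packaged inequality from \cite{GMS16} (your guess was right) with the ``event'' $N_2(T)/T$ and the stopping time $\tau\wedge T$, whereas you build the explicit event $B=\{\tau\leq T/2,\ \hat a=1\}$ and truncate at $T/2$; both choices lead to the identical constant.
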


\subsubsect{Explore-Then-Commit Strategies for Unknown Gaps}
When the gap is unknown it is not possible to tune a fixed-design strategy that achieves logarithmic regret. 
ETC strategies can enjoy logarithmic regret and these are now analysed. We start with the asymptotic lower bound.

\begin{theorem}\label{th:lowerBoundEC:DeltaUnKnown}
	Let $\pi$ be a uniformly efficient ETC strategy on $\Hall$.
	For all $\mu\in\Hall$, if $\Delta=|\mu_1-\mu_2|$ then
	\[ \liminf_{T\to\infty} \frac{R_\mu^{\pi}(T)}{\log(T)} \geq\frac{4}{\Delta}\;.\]
\end{theorem}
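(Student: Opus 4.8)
The plan is to lower bound $\E_\mu[\tau\wedge T]$ by an information-theoretic argument and then feed it into the ETC regret decomposition \eqref{regretETC}, which gives $R_\mu^\pi(T)\ge (\Delta/2)\,\E_\mu[\tau\wedge T]$. Fix $\mu\in\Hall$, say with $\mu_1>\mu_2$ so that $\Delta=\mu_1-\mu_2$ and arm $1$ is optimal, and set $m=(\mu_1+\mu_2)/2$. For $\epsilon>0$ introduce the alternative $\lambda=\lambda_\epsilon$ with $\lambda_1=m-\epsilon/2$ and $\lambda_2=m+\epsilon/2$; then $\lambda\in\Hall$, arm $2$ is optimal under $\lambda$ with gap $\epsilon$, and $\kl(\mu_1,\lambda_1)=\kl(\mu_2,\lambda_2)=(\Delta+\epsilon)^2/8$. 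Placing \emph{both} coordinates of $\lambda$ at the midpoint $m$ is the crucial choice: since the sampling of an ETC strategy is uniform, $N_1(\tau\wedge T)\approx N_2(\tau\wedge T)\approx (\tau\wedge T)/2$, and the midpoint alternative minimizes $\kl(\mu_1,\lambda_1)+\kl(\mu_2,\lambda_2)$ among all $\lambda$ for which arm $2$ is optimal. This is exactly where the extra factor of $2$ over the known-gap bound of Theorem~\ref{th:lowerBoundEC:DeltaKnown} appears: in the known case $\lambda$ must keep gap $\Delta$, whereas here $\lambda$ can have a vanishing gap.

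First I would extract the consequences of uniform efficiency, for the event $E:=\set{\hat a=1,\ \tau\le T/2}\in\cF_{\tau\wedge T}$. Under $\mu$: from \eqref{regretETC}, $R_\mu^\pi(T)\ge \tfrac{\Delta}{2}\E_\mu[\tau\wedge T]\ge \tfrac{\Delta T}{4}\P_\mu(\tau>T/2)$ and $R_\mu^\pi(T)\ge \Delta\,\E_\mu[(T-\tau)_+\ind{\hat a=2}]\ge \tfrac{\Delta T}{2}\P_\mu(\hat a=2,\tau\le T/2)$, so since $R_\mu^\pi(T)=o(T^\alpha)$ for all $\alpha$ we get $\P_\mu(E)\to 1$. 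Under $\lambda$ (uniformly efficient on $\Hall$, arm $1$ suboptimal by $\epsilon$): $R_\lambda^\pi(T)\ge \epsilon\,\E_\lambda[(T-\tau)_+\ind{\hat a=1}]\ge \tfrac{\epsilon T}{2}\P_\lambda(\hat a=1,\tau\le T/2)$, hence $\P_\lambda(E)\le 2R_\lambda^\pi(T)/(\epsilon T)=o(T^{\alpha-1})$ for every $\alpha>0$.

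Next I would apply the change-of-measure inequality at the stopping time. Since $\tau$ is an $(\cF_t)$-stopping time and $E\in\cF_{\tau\wedge T}$, Wald's identity gives the likelihood-ratio identity $\kl(\P_\mu(E),\P_\lambda(E))\le \E_\mu[N_1(\tau\wedge T)]\kl(\mu_1,\lambda_1)+\E_\mu[N_2(\tau\wedge T)]\kl(\mu_2,\lambda_2)\le \bigl(\tfrac12\E_\mu[\tau\wedge T]+1\bigr)\tfrac{(\Delta+\epsilon)^2}{4}$, using $N_i(\tau\wedge T)\le (\tau\wedge T+1)/2$. Combining with the elementary bound $\kl(p,q)\ge p\log(1/q)-\log 2$ and the estimates above: for any $\alpha\in(0,1)$, eventually $\P_\lambda(E)\le T^{\alpha-1}$, hence $(1-o(1))(1-\alpha)\log T - \log 2 \le \bigl(\tfrac12\E_\mu[\tau\wedge T]+1\bigr)\tfrac{(\Delta+\epsilon)^2}{4}$. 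This yields $\liminf_{T\to\infty}\E_\mu[\tau\wedge T]/\log T \ge 8(1-\alpha)/(\Delta+\epsilon)^2$; letting $\alpha\to0$ and $\epsilon\to0$ gives $\liminf_T \E_\mu[\tau\wedge T]/\log T\ge 8/\Delta^2$, and therefore $\liminf_T R_\mu^\pi(T)/\log T\ge \tfrac{\Delta}{2}\cdot\tfrac{8}{\Delta^2}=\tfrac{4}{\Delta}$.

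The main obstacle is the correct handling of the change of measure at $\tau\wedge T$: one must ensure the relevant likelihood ratio involves \emph{only} the exploration observations, which is why we condition on $\cF_{\tau\wedge T}$ rather than $\cF_T$ — with $\cF_T$ the post-commitment pulls of arm $1$ under $\mu$ would make the divergence essentially linear in $T$ and the bound vacuous. The secondary delicate point is converting "$\P_\mu(E)\to1$ and $\P_\lambda(E)=o(T^{\alpha-1})$" into a $\log T$-rate lower bound, which is precisely the role of $\kl(p,q)\ge p\log(1/q)-\log 2$ together with sending $\alpha\to 0$ only at the end. Everything else — the rounding in $N_i(\tau\wedge T)$, the regime where $T\Delta^2$ is small, integrability of $N_i(\tau\wedge T)\le T$ — affects only lower-order terms. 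I expect that a clean, reusable statement of the stopping-time change-of-measure inequality (which the paper advertises as its unified lower-bound lemma) is what makes this and the other three lower bounds fall out uniformly, the only difference between them being the admissible choice of the alternative $\lambda$.
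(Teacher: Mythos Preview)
Your proof is correct and follows essentially the same route as the paper: same midpoint alternative $\lambda=\big(\tfrac{\mu_1+\mu_2-\epsilon}{2},\tfrac{\mu_1+\mu_2+\epsilon}{2}\big)$, same change-of-measure at the stopping time $\tau\wedge T$ exploiting $N_1(\tau\wedge T)=N_2(\tau\wedge T)=(\tau\wedge T)/2$, and the same conversion to regret via $R_\mu^\pi(T)\ge\tfrac{\Delta}{2}\E_\mu[\tau\wedge T]$. The only cosmetic difference is that the paper applies the transportation inequality directly to the $[0,1]$-valued statistic $N_2(T)/T$ (which is $\cF_{\tau\wedge T}$-measurable since after $\tau\wedge T$ the agent already knows $N_2(T)$), whereas you construct the event $E=\{\hat a=1,\ \tau\le T/2\}$ by hand; both land on the same $\kl$ lower bound and the same constant.
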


A simple idea for constructing an algorithm that matches the lower bound is to use a (fixed-confidence) best arm identification algorithm for the 
exploration phase. Given a risk parameter $\delta$, a $\delta$-PAC BAI algorithm consists of a sampling rule $(A_t)$, a stopping rule $\tau$ 
and a recommendation rule $\hat{a}$ which is $\cF_\tau$ measurable and  satisfies, for all $\mu \in \Hall$ 
such that $\mu_1\neq \mu_2$,  $\P_\mu(\hat{a} = a^*) \geq 1-\delta$. In a bandit model with two Gaussian arms, \cite{COLT14} 
propose a $\delta$-PAC algorithm using a uniform sampling rule and a stopping rule $\tau_\delta$ that asymptotically attains the 
minimal sample complexity $\E_\mu[\tau_\delta] \sim (8/\Delta^2)\log(1/\delta)$. Using the regret decomposition \eqref{regretETC}, it is 
easy to show that the ETC algorithm using the stopping rule $\tau_\delta$ for $\delta=1/T$ matches the 
lower bound of Theorem~\ref{th:lowerBoundEC:DeltaUnKnown}. 

\begin{wrapfigure}[12]{r}{6cm}
\vspace{-1cm}
\begin{minipage}[b]{6cm}
\begin{algorithm}[H]
\begin{algorithmic}
\State {\bf input:} $T (\geq 3)$
\State $A_1 = 1, A_2 = 2$, $t:=2$
\While{$\left|\hat{\mu}_1(t)-\hat{\mu}_2(t)\right| < \sqrt{\frac{8\log(T/t)}{t}}$}
    \State choose $A_{t+1} = 1$ and $A_{t+2}=2$
    \State $t:=t+2$
\EndWhile
\State $\hat{a} := \argmax_{i} \hat{\mu}_i(t)$
\While{$t\leq T$}
    \State choose $A_t=\hat{a}$
    \State $t:=t+1$
\EndWhile
\end{algorithmic}
\caption{BAI-ETC algorithm}\label{alg:BAI}
\end{algorithm}
\end{minipage}
\end{wrapfigure}
Algorithm~\ref{alg:BAI} is a slight variant of this optimal BAI algorithm, based on the stopping time
\begin{align*}
\tau =  \inf\!\! \left\{t=2n : |\hat{\mu}_{1,n} - \hat{\mu}_{2,n}|\!\!>\! \sqrt{\frac{4 \log\big(T/(2n)\big)}{n}} \right\}\!.
\end{align*}
The motivation for the difference (which comes from a more carefully tuned threshold featuring $\log(T/2n)$ in place of $\log(T)$) is that the confidence level should depend on the unknown gap $\Delta$, which determines the regret when a mis-identification
occurs. The improvement only appears in the non-asymptotic regime where we are able to prove both asymptotic optimality and order-optimal minimax regret. The
latter would not be possible using a fixed-confidence BAI strategy.
The proof of this result can be found in 
\ifsup
Appendix~\ref{proof:th:optimBAI:DeltaUnKnown}. 
\else
the supplementary material.
\fi
The main difficulty is developing a sufficiently strong deviation bound, which we do 
\ifsup
in Appendix~\ref{app:tech},
\else
the supplementary material,
\fi
and that may be of independent interest.
Note that a similar strategy was proposed and analysed by \cite{LRS83}, but in the continuous time framework and with asymptotic analysis only.

\begin{theorem}\label{th:optimBAI:DeltaUnKnown} If $T\Delta^2 > 4e^2$, the regret of the  BAI-ETC algorithm is upper bounded as 
\begin{align*}
R_\mu^{\baietc}(T) \leq \frac{4\log\left(\frac{T\Delta^2}{4}\right)}{\Delta} + \frac{334\sqrt{\log\left(\frac{T\Delta^2}{4}\right)}}{\Delta} + \frac{178}{\Delta} + 2\Delta. 
\end{align*}
It is upper bounded by $T\Delta$ otherwise, and by $32\sqrt{T} + 2\Delta$ in any case. 
\end{theorem}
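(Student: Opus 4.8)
The plan is to reduce the bound to two simpler quantities via the ETC regret decomposition \eqref{regretETC}. Assume without loss of generality that $\mu_1=\mu_2+\Delta$, so $a^*=1$, and let $W_n=\sum_{k=1}^n(X_k-Y_k)$ be the random walk of reward differences, where $X_k$ (resp.\ $Y_k$) is the $k$-th reward collected from arm $1$ (resp.\ arm $2$); then $W_n\sim\mathcal N(n\Delta,2n)$, the exploration phase lasts $\tau=2\nu$ with $\nu=\inf\set{n\ge 1:|W_n|\ge\beta_n}$ and $\beta_n:=\sqrt{4n\log(T/(2n))}$, and the committed arm is wrong exactly when $W_\nu<0$. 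By \eqref{regretETC} it then suffices to control (i) $\tfrac{\Delta}{2}\,\E_\mu[\tau\wedge T]$ and (ii) $\Delta T\,\P_\mu(\tau\le T,\hat a\neq a^*)$.

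For (i) I would compare $\nu$ with the deterministic index $n_0=\ceil{\tfrac{4}{\Delta^2}\log(T\Delta^2/4)+\tfrac{c_1}{\Delta^2}\sqrt{\log(T\Delta^2/4)}}$, where the hypothesis $T\Delta^2>4e^2$ and the (large) constant $c_1$ are chosen precisely so that $\beta_{n_0}/n_0<\Delta$ with a comfortable margin. Writing $\E_\mu[\tau\wedge T]\le 2n_0+2\sum_{n\ge n_0}\P_\mu(\nu>n)$ and bounding, for $n\ge n_0$, $\P_\mu(\nu>n)\le\P_\mu(W_n<\beta_n)=\P_\mu\big(W_n-n\Delta<-(n\Delta-\beta_n)\big)\le\tfrac12\exp\big(-n(\Delta-\beta_n/n)^2/4\big)$, the tail sum is $O(1/\Delta^2)$ (a plain geometric estimate already works once $c_1$ is large, and the sharper bound of Appendix~\ref{app:tech} removes a spurious logarithmic factor). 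Multiplying by $\Delta/2$ produces the leading term $\tfrac{4}{\Delta}\log(T\Delta^2/4)$, the $\tfrac{1}{\Delta}\sqrt{\log(T\Delta^2/4)}$ correction (with constant $c_1$), an $O(1/\Delta)$ remainder, and an $O(\Delta)$ contribution from the ceiling.

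Step (ii) is the heart of the argument. On $\set{\tau\le T,\hat a\neq a^*}$ one has $W_n\le-\beta_n$ for some $n<T/2$, so the task reduces to bounding the probability that the positively drifting walk $W$ crosses the lower boundary $-\beta_n$ before time $T/2$. A naive union bound, using $\P_\mu(W_n\le-\beta_n)\le\tfrac nT\exp\big(-\Delta\sqrt{n\log(T/(2n))}-n\Delta^2/4\big)$, only yields $O(1/(T\Delta^4))$ and hence $O(1/\Delta^3)$ after multiplying by $\Delta T$, which is far too weak. The hard part will be to prove instead a maximal inequality giving $\P_\mu(\exists\,n<T/2:\ W_n\le-\beta_n)=O(1/(T\Delta^2))$; I would obtain it by a stitching/peeling argument over geometrically spaced blocks of $n$, combining in each block the Gaussian mixture (method-of-mixtures) supermartingale for the centred walk $W_n-n\Delta$ with the exponential supermartingale $\exp(-\Delta W_n)$ that exploits the drift, and using crucially the shape of the threshold: with $\log(T/(2n))$ in place of $\log T$, the boundary is far for small $n$ while the drift term $n\Delta^2/4$ dominates once $n\asymp n_0$. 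This is exactly the deviation inequality developed in Appendix~\ref{app:tech}. Granting it, $\Delta T\,\P_\mu(\tau\le T,\hat a\neq a^*)=O(1/\Delta)$, which feeds into the $\tfrac{178}{\Delta}$ term.

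Finally I would add the two pieces, absorb the rounding and the ``at least one sample is needed'' contribution into the $2\Delta$ term, and handle the remaining regimes elementarily: when $T\Delta^2\le 4e^2$ the trivial identity $R_\mu^{\baietc}(T)=\Delta\,\E_\mu[N_2(T)]\le T\Delta$ already gives the claim, and the uniform minimax bound $32\sqrt T+2\Delta$ follows by taking, for each $\Delta$, the minimum of $T\Delta$ and the logarithmic bound and checking that this minimum never exceeds $32\sqrt T$ (the worst case being $T\Delta^2$ of moderate size). The only genuinely delicate ingredient is the maximal deviation inequality of step (ii); everything else is careful bookkeeping with Gaussian tail bounds.
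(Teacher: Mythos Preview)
Your plan matches the paper's proof: decompose via \eqref{regretETC}, control $\E_\mu[\tau\wedge T]$ with Lemma~\ref{lem:conc}.(b), and control the misidentification probability with the maximal inequality Lemma~\ref{lem:conc}.(c). Two points of divergence are worth noting. First, Lemma~\ref{lem:conc}.(c) actually gives $\P_\mu\big(\exists\,n<T/2:\ W_n\le-\beta_n\big)=O\big(\sqrt{\log(T\Delta^2)}/(T\Delta^2)\big)$, not $O(1/(T\Delta^2))$; after multiplying by $T\Delta$, step~(ii) therefore contributes about $120e\,\sqrt{\log(T\Delta^2/4)}/\Delta$ to the regret, and this is the dominant source of the $334$ constant. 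You have the attribution reversed: in the paper, step~(i) is handled by applying Lemma~\ref{lem:conc}.(b) directly to the sum $\sum_s\P_\mu(\tau\ge 2s+1)$ (no explicit $n_0$ or tuning of a large $c_1$) and contributes only $4\sqrt{\pi}\approx 7$ to the $\sqrt{\log}$ coefficient. Second, the paper's proof of Lemma~\ref{lem:conc}.(c) proceeds by geometric peeling combined with the \emph{reflection principle} for the Gaussian random walk, not by method-of-mixtures or an exponential drift supermartingale; your proposed route may well succeed but is not what is done, and would in any case yield different constants.
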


\section{Fully Sequential Strategies for Known and Unknown Gaps}\label{sec:general}
In the previous section we saw that allowing a random stopping time leads to a factor of $4$ improvement in terms of the asymptotic regret relative to the naive fixed-design strategy. We now turn our attention to fully sequential strategies when $\Delta$ is known and unknown. The latter case is the classic 2-armed bandit problem and is now quite well understood.
Our modest contribution in that case is the first algorithm that is simultaneously asymptotically optimal and order optimal
in the minimax sense. For the former case, we are not aware of any previous research where the gap is known except the line of work by \cite{BPR13,BC13}, where different questions are treated. In both cases we see that fully sequential strategies improve on the best ETC strategies by a factor of $2$.

\subsubsect{Known Gaps} 
We start by stating the lower bound (proved in Section~\ref{sec:proofs:LB}), which is a straightforward generalisation of Lai and Robbins' lower bound. 

\begin{theorem}\label{th:lowerBound:DeltaKnown}
Let $\pi$ be a strategy that is uniformly efficient on $\HDelta$. Then for all $\mu \in \HDelta$, 
\[\liminf_{T\rightarrow \infty} \frac{R_\mu^\pi(T)}{\log T} \geq \frac{1}{2\Delta}\]
\end{theorem}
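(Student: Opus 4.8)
The plan is to derive Theorem~\ref{th:lowerBound:DeltaKnown} from a generic change-of-measure argument in the spirit of Lai and Robbins, but working within the restricted parameter set $\HDelta$, which is the source of the improved constant $1/(2\Delta)$ (rather than the usual $2/\Delta$). Fix $\mu\in\HDelta$ and assume without loss of generality that $\mu_1 > \mu_2$, so $a^* = 1$ and $\mu_1 = \mu_2 + \Delta$. The key observation is that, because the gap is known to equal $\Delta$, the only ``confusing'' alternative consistent with $\HDelta$ that makes arm $2$ optimal is $\mu' = (\mu_2, \mu_2 + \Delta) = (\mu_1 - \Delta, \mu_1)$; that is, we shift \emph{both} coordinates rather than just the coordinate of the suboptimal arm. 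Under $\mu'$ the roles of the arms are swapped, and a uniformly efficient strategy must pull arm $1$ (optimal under $\mu$) only $o(T^\alpha)$ times under $\mu'$, while under $\mu$ it pulls arm $2$ only $o(T^\alpha)$ times.

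First I would recall the standard transportation/log-likelihood-ratio inequality: for any event $\mathcal{E} \in \mathcal{F}_T$,
\begin{align*}
\E_\mu[N_1(T)]\,\kl(\mu_1,\mu_1') + \E_\mu[N_2(T)]\,\kl(\mu_2,\mu_2') \;\geq\; \kl\bigl(\P_\mu(\mathcal{E}), \P_{\mu'}(\mathcal{E})\bigr)\,,
\end{align*}
where $\kl(a,b)$ is the binary relative entropy and $\kl(\mu_i,\mu_i')$ is the Gaussian KL-divergence, which here equals $(\mu_i - \mu_i')^2/2 = \Delta^2/2$ for \emph{both} $i=1$ and $i=2$ since both coordinates are shifted by $\Delta$. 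Hence the left-hand side collapses to $(\Delta^2/2)\bigl(\E_\mu[N_1(T)] + \E_\mu[N_2(T)]\bigr) = (\Delta^2/2)\,T$, which is not directly useful — so instead I would apply the inequality with the event $\mathcal{E} = \{N_1(T) < T/2\}$ (a ``mistake'' event for arm-choice), but split the divergence sum more carefully. Actually the cleaner route: apply the inequality with $\mathcal{E}=\{\hat a_T \ne 1\}$ or with $\mathcal{E} = \{N_2(T) \geq T/2\}$; uniform efficiency on $\HDelta$ forces $\P_\mu(\mathcal{E})\to 0$ and $\P_{\mu'}(\mathcal{E})\to 1$, so $\kl(\P_\mu(\mathcal{E}),\P_{\mu'}(\mathcal{E})) \geq (1-o(1))\log T$ by the usual bound $\kl(\delta_T, 1-\epsilon_T) \geq (1-\delta_T)\log(1/\epsilon_T) - \log 2$. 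This gives
\begin{align*}
\frac{\Delta^2}{2}\Bigl(\E_\mu[N_1(T)] + \E_\mu[N_2(T)]\Bigr) \geq (1 - o(1))\log T\,,
\end{align*}
which is again vacuous because $N_1(T)+N_2(T)=T$. The fix is that one must NOT shift both coordinates; rather, the correct confusing instance keeps the problem in $\HDelta$ by shifting one coordinate \emph{and recognising that the two arms' roles interchange}, so the relevant counting term is $\E_\mu[N_2(T)]$ multiplied by the \emph{sum} of the two per-arm divergences, because both arms' laws differ between $\mu$ and $\mu'$ on the samples that matter. Carefully: write $D = \E_\mu[N_1(T)]\kl(\mu_1,\mu_1') + \E_\mu[N_2(T)]\kl(\mu_2,\mu_2')$ with $\mu_1' = \mu_2, \mu_2' = \mu_1$; then $\kl(\mu_1,\mu_1') = \kl(\mu_2,\mu_2') = \Delta^2/2$ still, so this particular choice fails and one sees that in fact for \emph{known} $\Delta$ the binary-KL bound must be combined with the observation that $\E_\mu[N_2(T)] = R_\mu^\pi(T)/\Delta$ while the informative comparison only ``charges'' the $N_2$ pulls. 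The right statement is obtained by using the alternative $\mu' = (\mu_1-\Delta,\mu_1)$ but recognising $N_1$ under $\mu$ is $T - \E_\mu[N_2(T)] = T - R/\Delta$, so $D = (\Delta^2/2)T$, and then deploying the inequality with the complementary framing that isolates the lower-order term: the factor-of-4 gain over the standard bound comes precisely because shifting both means costs divergence $\Delta^2/2$ per pull \emph{on each arm} rather than $(2\Delta)^2/2 = 2\Delta^2$ on one arm, i.e. the ``optimal'' separation distance is halved in each coordinate.

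Thus the main steps are: (1) reduce to $\mu_1 = \mu_2 + \Delta$ and identify the unique $\HDelta$-confusing alternative $\mu' = (\mu_2,\mu_1)$; (2) invoke the generic lower bound machinery of Appendix~\ref{sec:proofs:LB} (which the paper promises gives all lower bounds uniformly) with this $\mu,\mu'$ pair and the well-chosen event $\mathcal{E}$; (3) plug in $\kl(\mu_i,\mu_i') = \Delta^2/2$ and use $\E_\mu[N_2(T)] = R_\mu^\pi(T)/\Delta$ together with the fact that, under $\mu'$, uniform efficiency forces $\E_{\mu'}[N_1(T)]$ small, so that the ``effective'' number of informative samples is $\min(\E_\mu[N_2(T)], \E_{\mu'}[N_1(T)]) + (\text{symmetric term})$, yielding $\E_\mu[N_2(T)] \cdot \Delta^2/2 \cdot (1 + 1) \geq (1-o(1))\log T$, hence $R_\mu^\pi(T)/\Delta \geq (1-o(1))\log(T)/\Delta^2$, i.e. $\liminf R_\mu^\pi(T)/\log T \geq 1/(2\Delta)$; (4) take $\liminf$. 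The main obstacle is getting the bookkeeping of step (3) exactly right — specifically, being careful that because \emph{both} coordinates move, the divergence is shared across both arms, and that one should apply the change of measure symmetrically (or to the pair of instances $\mu$ and $\mu'$ simultaneously) so that the $\log T$ on the right is divided by $2 \times (\Delta^2/2) = \Delta^2$ rather than by $\Delta^2/2$; this symmetrisation, rather than any analytic subtlety, is where the constant $1/(2\Delta)$ (as opposed to $1/\Delta$ or $2/\Delta$) is pinned down, and it is the step I would write out most carefully, referring to the unified lower-bound proof in Section~\ref{sec:proofs:LB}.
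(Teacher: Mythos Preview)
Your proposal contains a genuine gap, and you actually identify it yourself: with the swapped alternative $\mu'=(\mu_2,\mu_1)$, both KL terms equal $\Delta^2/2$ and the transportation inequality collapses to $(\Delta^2/2)\,T \geq (1-o(1))\log T$, which is vacuous. Your subsequent attempt to repair this via a ``symmetrisation'' argument --- claiming that somehow the effective divergence charged is $\E_\mu[N_2(T)]\cdot\Delta^2/2\cdot(1+1)$ --- is not justified. The inequality from \cite{GMS16} reads $\sum_i \E_\mu[N_i(\sigma)]\,(\mu_i-\mu_i')^2/2 \geq \kl(\cdot,\cdot)$, and the $N_1$ term cannot be dropped, reweighted, or traded against its counterpart under $\mu'$ in the way your step~(3) suggests. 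Applying the inequality in both directions and adding gives the same vacuous bound twice.

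The missing idea is the choice of alternative. You assert that the swap $\mu'=(\mu_2,\mu_1)$ is ``the only confusing alternative consistent with $\HDelta$'', but this is false: any $\mu'\in\HDelta$ with $\mu_2'>\mu_1'$ works, and the paper picks $\mu_1'=\mu_1$, $\mu_2'=\mu_1+\Delta=\mu_2+2\Delta$. This leaves arm~1 unchanged and moves only arm~2, by $2\Delta$. The divergence sum is then
\[
\E_\mu[N_1(T)]\cdot 0 \;+\; \E_\mu[N_2(T)]\cdot\frac{(2\Delta)^2}{2} \;=\; 2\Delta^2\,\E_\mu[N_2(T)]\,,
\]
and the vacuous $N_1$ contribution disappears entirely. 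Uniform efficiency under $\mu'$ (where arm~1 is now suboptimal) gives $\E_{\mu'}[N_1(T)]=o(T^\alpha)$, so the right-hand side is $(1-o(1))\log T$, yielding $\E_\mu[N_2(T)]\geq (1-o(1))\log(T)/(2\Delta^2)$ and hence $R_\mu^\pi(T)/\log T \to 1/(2\Delta)$. The constant $1/(2\Delta)$ thus comes from shifting \emph{one} coordinate by $2\Delta$ (cost $(2\Delta)^2/2=2\Delta^2$ per pull of arm~2), not from sharing a shift of $\Delta$ across both arms.
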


We are not aware of any existing algorithm matching this lower bound, which motivates us to introduce a new strategy called $\Delta$-UCB that exploits
the knowledge of $\Delta$ to improve the performance of UCB.
In each round the algorithm chooses the arm that has been played most often so far unless the other arm has an upper confidence bound that is close to $\Delta$ 
larger than the empirical estimate of the most played arm. Like ETC strategies, $\Delta$-UCB is not anytime in the sense that it requires the 
knowledge of both the horizon $T$ and the gap $\Delta$.

\begin{algorithm}[H]
\begin{algorithmic}[1]
\State {\bf input:} $T$ and $\Delta$
\State $\epsilon_T = \Delta \log^{-\frac{1}{8}}(e + T\Delta^2)/4$
\For{$t \in \set{1,\ldots, T}$} 
\State let $\displaystyle A_{t,\min} := \argmin_{i \in {1,2}} N_i(t-1)$ and $A_{t,\max} = 3 - A_{t,\min}$ 
\If {$\displaystyle \hat \mu_{A_{t,\min}}(t-1) + \sqrt{\frac{2 \log\left(\frac{T}{N_{A_{t,\min}}(t-1)}\right)}{N_{A_{t,\min}}(t-1)}} \geq \hat \mu_{A_{t,\max}}(t-1) + \Delta - 2\epsilon_T$}
\State choose $A_t = A_{t,\min}$
\Else
\State choose $A_t = A_{t,\max}$
\EndIf 
\EndFor
\end{algorithmic}
\caption{$\Delta$-UCB}\label{alg:GAUCB}
\end{algorithm}

\newcommand{\ducb}{\text{\scalebox{0.8}{$\Delta$-UCB}}}

\begin{theorem}\label{th:GAUCB:Deltaknown} If $T(2\Delta-3\epsilon_T)^2 \geq 2$ and $T\epsilon_T^2 \geq e^2$, the regret of the \ducb \ algorithm is upper bounded as 
\begin{align*}
R_\mu^{\ducb}(T) &\leq \frac{\log\left(2T\Delta^2\right)}{2\Delta(1 - 3\epsilon_T/(2\Delta))^2} + \frac{\sqrt{\pi\log\left(2T\Delta^2\right)}}{2\Delta(1 - 3\epsilon_T/\Delta)^2}  \\
& \qquad+  \Delta\left[\frac{30e \sqrt{\log(\epsilon^2_T T)}}{\epsilon_T^2} 
+ \frac{80}{\epsilon_T^2}+ \frac{2}{(2\Delta - 3\epsilon_T)^2}\right] + 5\Delta. 
\end{align*}
Moreover $\limsup_{T\to\infty} R_\mu^{\ducb}(T) / \log(T) \leq (2\Delta)^{-1}$ and $\forall\mu \in \mathcal H_\Delta, \ R_\mu^{\ducb}(T) \leq 328\sqrt{T} + 5\Delta$.
\end{theorem}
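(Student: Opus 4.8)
# Proof Proposal for Theorem~\ref{th:GAUCB:Deltaknown}

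\noindent\textbf{Overall approach.} The plan is to bound $R_\mu^{\ducb}(T) = \Delta\,\E_\mu[N_{a^\dagger}(T)]$, where $a^\dagger$ is the suboptimal arm, by controlling how often the algorithm pulls the wrong arm. There are two distinct regimes to handle: (i) the \emph{early phase}, where the suboptimal arm is the least-played arm $A_{t,\min}$ and gets pulled because its (overly optimistic, since it uses $\sqrt{2\log(T/N)/N}$) confidence bound exceeds the shifted estimate of the optimal arm; and (ii) the \emph{late phase}, where the suboptimal arm has become the most-played arm $A_{t,\max}$ — a ``bad event'' that should essentially never happen for a uniformly efficient-looking strategy, but must be controlled to get the minimax $\sqrt{T}$ term and the lower-order contributions. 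I would write $N_{a^\dagger}(T) = \sum_{t} \ind{A_t = a^\dagger}$ and split the sum according to whether $a^\dagger = A_{t,\min}$ or $a^\dagger = A_{t,\max}$ at step $t$.

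\noindent\textbf{Key steps.} First, for the early phase, on the event $\{A_t = A_{t,\min} = a^\dagger\}$ the selection rule forces
\[
\hat\mu_{a^\dagger}(t-1) + \sqrt{\frac{2\log(T/N_{a^\dagger}(t-1))}{N_{a^\dagger}(t-1)}} \;\geq\; \hat\mu_{a^*}(t-1) + \Delta - 2\epsilon_T .
\]
I would argue that if $N_{a^\dagger}(t-1)$ is already larger than roughly $\frac{2\log(T\Delta^2)}{(2\Delta - 3\epsilon_T)^2}$ (the threshold implicit in the leading term), then this inequality requires a large deviation of at least one of the two empirical means — precisely, $\hat\mu_{a^\dagger}(t-1) - \mu_{a^\dagger} + \mu_{a^*} - \hat\mu_{a^*}(t-1) \geq$ some positive margin scaling like $\epsilon_T$. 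This is where the choice $\epsilon_T = \tfrac{\Delta}{4}\log^{-1/8}(e+T\Delta^2)$ matters: it is small enough that $(1 - 3\epsilon_T/(2\Delta))^2 \to 1$, giving the asymptotically optimal constant $\tfrac{1}{2\Delta}$, yet large enough that the deviation probability, summed over $t$, contributes only a lower-order $\Delta \cdot \mathrm{poly}(1/\epsilon_T)\sqrt{\log(\epsilon_T^2 T)}$ term. I would use a peeling/union-bound argument over dyadic ranges of $N_{a^\dagger}$ together with a Gaussian tail bound (in the refined form the paper develops in its technical appendix) to get the $\frac{\sqrt{\pi\log(2T\Delta^2)}}{2\Delta(1-3\epsilon_T/\Delta)^2}$ correction and the explicit constants $30e$, $80$.

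\noindent\textbf{Late phase and wrap-up.} For the late phase, I would show that $\{a^\dagger = A_{t,\max}\}$ implies $N_{a^\dagger}(t-1) \geq N_{a^*}(t-1)$, and since at least one arm has been pulled $\geq (t-1)/2$ times, this forces the optimal arm's estimate to have stayed low while the suboptimal estimate stayed high over a long horizon — an event of probability exponentially small in $t$ whenever the counts exceed the phase-(i) threshold. Summing $\Delta\,\P_\mu(a^\dagger = A_{t,\max})$ over $t$ gives an $O(\Delta/(2\Delta-3\epsilon_T)^2)$ contribution, which is the source of the $\frac{2}{(2\Delta-3\epsilon_T)^2}$ term. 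Combining the two phases yields the stated finite-time bound; then I take $T\to\infty$ (using $\epsilon_T/\Delta \to 0$ and that all the $1/\epsilon_T^2$ terms are $o(\log T)$ since $\epsilon_T^{-2} = 16\Delta^{-2}\log^{1/4}(e+T\Delta^2)$) to get $\limsup R_\mu^{\ducb}(T)/\log T \leq 1/(2\Delta)$; finally, for the uniform $\sqrt{T}$ bound I optimise the finite-time expression over the two trivial regimes (when $T\Delta^2$ is small one uses $R \leq T\Delta$, and the crossover with the logarithmic bound occurs near $\Delta \sim 1/\sqrt{T}$, producing the constant $328$).

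\noindent\textbf{Main obstacle.} The hardest part will be the early-phase deviation analysis: getting the leading constant exactly $\tfrac{1}{2\Delta}$ requires that the ``one pull per round that would otherwise be skipped'' structure of $\Delta$-UCB be exploited — i.e., unlike vanilla UCB we do not pay $\log T$ for \emph{both} confidence widths but effectively only once, because the comparison is against $\hat\mu_{a^*} + \Delta$ rather than $\hat\mu_{a^*} + \sqrt{2\log(\cdot)/N_{a^*}}$. Making this rigorous while simultaneously keeping every correction term explicit (so that the $\sqrt{\pi\log(2T\Delta^2)}$ and the various constants come out) demands the sharpened, non-asymptotic Gaussian concentration inequality the paper advertises; bootstrapping that bound through the peeling argument, and checking that the sub-$\log T$ ``garbage'' genuinely has the claimed form, is the technical heart of the proof.
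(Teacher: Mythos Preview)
Your high-level decomposition --- split $\sum_t \ind{A_t=a^\dagger}$ according to whether $a^\dagger=A_{t,\min}$ or $a^\dagger=A_{t,\max}$ --- matches the paper's, but you are missing the key device that makes both branches tractable. In both your ``early'' and ``late'' phases the selection rule compares the UCB of $A_{t,\min}$ to $\hat\mu_{A_{t,\max}}(t-1)+\Delta-2\epsilon_T$, and to turn this into a clean deviation event you must replace $\hat\mu_{A_{t,\max}}(t-1)$ by $\mu_{A_{t,\max}}\pm\epsilon_T$. But $N_{A_{t,\max}}(t-1)$ is random, and your sentence ``requires a large deviation of at least one of the two empirical means'' does not explain how to sum such events over $t$ without picking up an extra $\log T$ from a union bound over the unknown sample count of $a^*$. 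The paper handles this by introducing the random time
\[
\tau=\max_{i\in\{1,2\}}\min\Big\{t:\sup_{s\ge t}|\hat\mu_{i,s}-\mu_i|<\epsilon_T\Big\},
\]
for which Lemma~\ref{lem:conc}(a) gives $\E_\mu[\tau]\le 2+18/\epsilon_T^2$. For $t>2\tau$ the most-played arm has $N_{A_{t,\max}}(t-1)\ge\tau$, so $|\hat\mu_{A_{t,\max}}(t-1)-\mu_{A_{t,\max}}|<\epsilon_T$ holds \emph{deterministically}; this is what produces the clean split in \eqref{eq:split} and is the missing idea in your sketch.

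A secondary point: your attribution of terms is off. The $\tfrac{2}{(2\Delta-3\epsilon_T)^2}$ term does not come from the ``late phase''; it is the constant in Lemma~\ref{lem:conc}(b) applied to the event $\hat\mu_{2,s}+\sqrt{2\log(T/s)/s}\ge\mu_2+2\Delta-3\epsilon_T$ (your early phase). The late phase --- where $a^\dagger=A_{t,\max}$ and hence the UCB of arm~1 must fall below $\mu_1-\epsilon_T$ --- is controlled by Lemma~\ref{lem:conc}(c) via $T\cdot\P(\exists s\le T:\text{UCB}_1(s)\le\mu_1-\epsilon_T)$, which yields the $30e\sqrt{\log(\epsilon_T^2T)}/\epsilon_T^2$ term; this is not ``exponentially small in $t$'' as you suggest. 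The $80/\epsilon_T^2$ combines $\E[2\tau]\le 4+36/\epsilon_T^2$ with the $16e/\epsilon_T^2$ from Lemma~\ref{lem:conc}(c). No peeling over dyadic ranges of $N_{a^\dagger}$ is needed once $\tau$ is in place.
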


The proof may be found in
\ifsup
Appendix~\ref{proof:th:GAUCB:Deltaknown}.
\else
the supplementary material.
\fi

\subsubsect{Unknown Gaps}
In the classical bandit setting where $\Delta$ is unknown, UCB by \cite{KR95} is known to be asymptotically optimal:
$ R_\mu^{\text{\scalebox{0.8}{UCB}}}(T) \sim 2\log(T)/\Delta$, which matches the lower bound of \cite{LR85}. 
Non-asymptotic regret bounds are given for example by \cite{ACF02,CGMMS13}. 
Unfortunately, UCB is not optimal in the minimax sense, which is so 
far only achieved by algorithms that are not asymptotically optimal \citep{AB09,Lat15-ucb}.
Here, with only two arms, we are able to show that Algorithm~\ref{algo:UCBstar} below is simultaneously minimax order-optimal and asymptotically optimal. 
The strategy is essentially the same as suggested by \cite{Lai87}, but with a fractionally smaller confidence bound. 
The proof of Theorem~\ref{th:ucb} is given in 
\ifsup 
Appendix~\ref{app:th:ucb}.
\else 
the supplementary material. 
\fi
Empirically the smaller confidence bonus used by UCB$^*$ leads to a significant improvement relative to UCB.

\begin{algorithm}[H]
\begin{algorithmic}[1]
\State {\bf input:} $T$
\For{$t \in \set{1,\ldots,T}$}
\State $\displaystyle A_t = \argmax_{i \in \set{1,2}} \hat \mu_i(t-1) + \sqrt{\frac{2}{N_i(t-1)} \log\left(\frac{T}{N_i(t-1)}\right)}$
\EndFor
\end{algorithmic}
\caption{UCB$^*$}\label{algo:UCBstar}
\end{algorithm}

\begin{theorem}\label{th:ucb} For all $\epsilon\in(0,\Delta)$, if $T(\Delta-\epsilon)^2 \geq 2$ and $T\epsilon^2 \geq e^2$, the regret of the UCB$^*$ strategy is upper bounded as
\[
R^{\ucb^*}_\mu(T) 
  \leq \frac{2\log\left(\frac{T\Delta^2}{2}\right)}{\Delta\left(1 - \frac{\epsilon}{\Delta}\right)^2} + \frac{2\sqrt{\pi\log\left(\frac{T\Delta^2}{2}\right)}}{\Delta\left(1 - \frac{\epsilon}{\Delta}\right)^2} 
 + \Delta\left(\frac{30e \sqrt{\log(\epsilon^2 T)}+16e}{\epsilon^2} \right) +  \frac{2}{\Delta\left(1 - \frac{\epsilon}{\Delta}\right)^2} +\Delta. \]
Moreover, $\limsup_{T\to\infty} R^\pi_\mu(T) / \log(T) = 2/\Delta$ and for all $\mu\in\Hall$, $R_\mu^\pi(T) \leq 33 \sqrt{T} + \Delta$. 
\end{theorem}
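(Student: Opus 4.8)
The plan is to bound the regret via $R^{\ucb^*}_\mu(T) = \Delta\,\E_\mu[N_2(T)]$, assuming without loss of generality that arm $1$ is optimal (so $\mu_1 = \mu_2 + \Delta$), and to show $\E_\mu[N_2(T)] \le m + (\text{small})$ for a well‑chosen threshold $m$. Write $U_i(t) = \hat\mu_i(t-1) + \sqrt{\tfrac{2}{N_i(t-1)}\log(T/N_i(t-1))}$ for the index of arm $i$ at round $t$, so $A_t = \argmax_i U_i(t)$. Fix the free parameter $\epsilon\in(0,\Delta)$ and set $m \approx 2(\Delta-\epsilon)^{-2}\log(T\Delta^2/2)$ (up to the ceiling and lower‑order corrections fixed at the end). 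Since $N_2$ increases only when arm $2$ is pulled, $N_2(T) \le m + \sum_{t=1}^{T}\ind{A_t = 2,\ N_2(t-1)\ge m}$, and on $\{A_t=2\}$ we have $U_2(t)\ge U_1(t)$, hence
\[
\ind{A_t = 2,\ N_2(t-1)\ge m} \;\le\; \underbrace{\ind{U_1(t) < \mu_1 - \epsilon}}_{(\mathrm A)} \;+\; \underbrace{\ind{U_2(t) \ge \mu_1 - \epsilon,\ N_2(t-1)\ge m}}_{(\mathrm B)}\,.
\]

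\textbf{Term $(\mathrm B)$ — the leading part of the regret.} For each value $s$ of $N_2(t-1)$ this event occurs in at most one round, so $\sum_t(\mathrm B) \le \sum_{s\ge m}\ind{\hat\mu_{2,s} + \sqrt{2s^{-1}\log(T/s)} \ge \mu_2 + \Delta - \epsilon}$, and the Gaussian tail $\P_\mu(\hat\mu_{2,s}-\mu_2\ge x)\le \tfrac12 e^{-sx^2/2}$ gives
\[
\E_\mu\Big[\textstyle\sum_t(\mathrm B)\Big] \;\le\; \sum_{s\ge m}\tfrac12\exp\!\Big(-\tfrac12\big(\sqrt s\,(\Delta-\epsilon) - \sqrt{2\log(T/s)}\big)^2\Big)\,.
\]
The threshold $m$ is chosen so that the summand at $s=m$ is $O(1)$; comparing the remaining sum with the integral produced by the substitution $u = \sqrt s\,(\Delta-\epsilon) - \sqrt{2\log(T/s)}$, whose Jacobian is of order $(u + \sqrt{2\log(T/m)})(\Delta-\epsilon)^{-2}$, bounds the sum by a quantity of order $(\Delta-\epsilon)^{-2}\big(1 + \sqrt{\pi\log(T\Delta^2/2)}\big)$. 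Adding $m$ itself and multiplying by $\Delta$ produces the first two summands of the stated bound together with the $2/(\Delta(1-\epsilon/\Delta)^2)$ remainder, the numerical constants being tracked through these elementary estimates (the spare factors $2$ absorbing the sum‑versus‑integral slack).

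\textbf{Term $(\mathrm A)$ — the crux.} Here one must bound $\E_\mu\big[\sum_{t=1}^T \ind{U_1(t) < \mu_1 - \epsilon}\big]$, the expected number of rounds in which the optimal arm's index underestimates $\mu_1$ by at least $\epsilon$. Grouping rounds by the value $s = N_1(t-1)$ and writing $L_s$ for the (random) number of rounds in which arm $1$ has been pulled exactly $s$ times, this equals $\sum_s \P_\mu\big(\hat\mu_{1,s} + b(s) < \mu_1 - \epsilon\big)\,L_s$ with $b(s)=\sqrt{2s^{-1}\log(T/s)}$; the Gaussian tail gives $\P_\mu(\cdot)\le \tfrac12 e^{-s(\epsilon + b(s))^2/2} \le \tfrac{s}{2T}\,e^{-s\epsilon^2/2}$ since $s\,b(s)^2 = 2\log(T/s)$, so even the trivial $L_s\le T$ yields the finite bound $\tfrac12\sum_s s\,e^{-s\epsilon^2/2} = O(\epsilon^{-4})$. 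This crude estimate already suffices for asymptotic optimality but is too weak for an order‑optimal minimax constant, and sharpening it is precisely where the refined deviation inequality proved in the appendix enters: a maximal/peeling argument controls the whole sum $\sum_t \ind{U_1(t) < \mu_1 - \epsilon}$ by splitting at the scale $s \asymp 2\epsilon^{-2}\log(\epsilon^2 T)$ at which the \emph{shrinking} bonus $b(s)$ — far smaller than the fixed‑confidence bonus $\sqrt{2s^{-1}\log T}$, which is what makes any term‑by‑term union bound over $s\le T$ hopeless — drops below $\epsilon$, improving the estimate to $O\big(\epsilon^{-2}\sqrt{\log(\epsilon^2 T)}\big)$. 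Multiplying by $\Delta$ gives the third bracketed term $\Delta\big(30e\sqrt{\log(\epsilon^2 T)} + 16e\big)/\epsilon^2$.

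\textbf{Assembling the bound and the corollaries.} Collecting the three contributions, adding $O(\Delta)$ for the initial pull of the suboptimal arm and the ceiling in $m$, and checking that the hypotheses $T(\Delta-\epsilon)^2\ge 2$ and $T\epsilon^2\ge e^2$ are exactly what make $m\ge 1$ and the split scale above well defined, yields the displayed finite‑time inequality for every $\epsilon\in(0,\Delta)$. For the asymptotic claim, taking $\epsilon=\epsilon_T\to 0$ slowly (e.g. $\epsilon_T=\Delta/\log\log T$) kills the $\epsilon$‑dependent remainders relative to $\log T$ and sends $(1-\epsilon_T/\Delta)^{-2}\to 1$, so $\limsup_T R^{\ucb^*}_\mu(T)/\log T \le 2/\Delta$; the matching lower bound $\ge 2/\Delta$ is the Lai--Robbins bound \cite{LR85} (equivalently, the two‑armed case of the technique behind Theorem~\ref{th:lowerBound:DeltaKnown}), whence equality. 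For the $O(\sqrt T)$ minimax bound I would split on the size of $T\Delta^2$: when it is small, $R^{\ucb^*}_\mu(T)\le \Delta T$ is already $O(\sqrt T)$; when it is large, choosing $\epsilon$ proportional to $\Delta$ in the finite‑time bound and using elementary inequalities of the form $\log x \le x/e$ to turn each logarithmic term into a multiple of $\sqrt T$ gives the uniform bound $33\sqrt T + \Delta$. The one genuine obstacle is Term $(\mathrm A)$: everything else is bookkeeping, whereas the maximal inequality controlling the optimal arm's underestimation events with the $\log(T/s)$ bonus is the real content of the proof.
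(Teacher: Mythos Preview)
Your proposal is essentially the paper's proof. The decomposition into ``arm~2's index overshoots $\mu_1-\epsilon$'' versus ``arm~1's index undershoots $\mu_1-\epsilon$'' is exactly the split used in Appendix~\ref{app:th:ucb}, and you correctly identify the maximal inequality (Lemma~\ref{lem:conc}.(c)) as the real content. Two minor organisational points where the paper is cleaner: (i) it does not introduce the threshold $m$ at all --- the first term is bounded directly as $\sum_{s=1}^T \P_\mu\big(\hat\mu_{2,s}+\sqrt{2s^{-1}\log(T/s)}\ge \Delta-\epsilon\big)$ and Lemma~\ref{lem:conc}.(b) absorbs both your $m$ and your tail sum in one stroke; (ii) for Term~(A) the paper does not go through the $L_s$ bookkeeping (where, incidentally, your equality ``this equals $\sum_s \P_\mu(\cdot)L_s$'' is not an equality since $L_s$ and the deviation event are dependent) but simply bounds $\sum_{t}\ind{U_1(t)<\mu_1-\epsilon}\le T\cdot\ind{\exists s\le T:\ \hat\mu_{1,s}+\sqrt{2s^{-1}\log(T/s)}\le \mu_1-\epsilon}$ and applies Lemma~\ref{lem:conc}.(c) to the single probability. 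The asymptotic and minimax corollaries are obtained exactly as you describe (the paper takes $\epsilon=\log^{-1/8}(T)$ rather than $\Delta/\log\log T$, but any slowly vanishing choice works).
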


Note that if there are $K > 2$ arms, then the strategy above is still asymptotically optimal, but suffers a minimax regret of $\Omega(\sqrt{TK \log(K)})$, which is a factor of $\sqrt{\log(K)}$ suboptimal.

\section{Numerical Experiments}
\label{sec:numerical}
We represent here the regret of the five strategies presented in this article on a bandit problem with $\Delta=1/5$, for different values of the horizon. The regret is estimated by $4.10^5$ Monte-Carlo replications.
In the legend, the estimated slopes of $\Delta R^\pi(T)$ (in logarithmic scale) are indicated after the policy names.

\includegraphics[width=\textwidth]{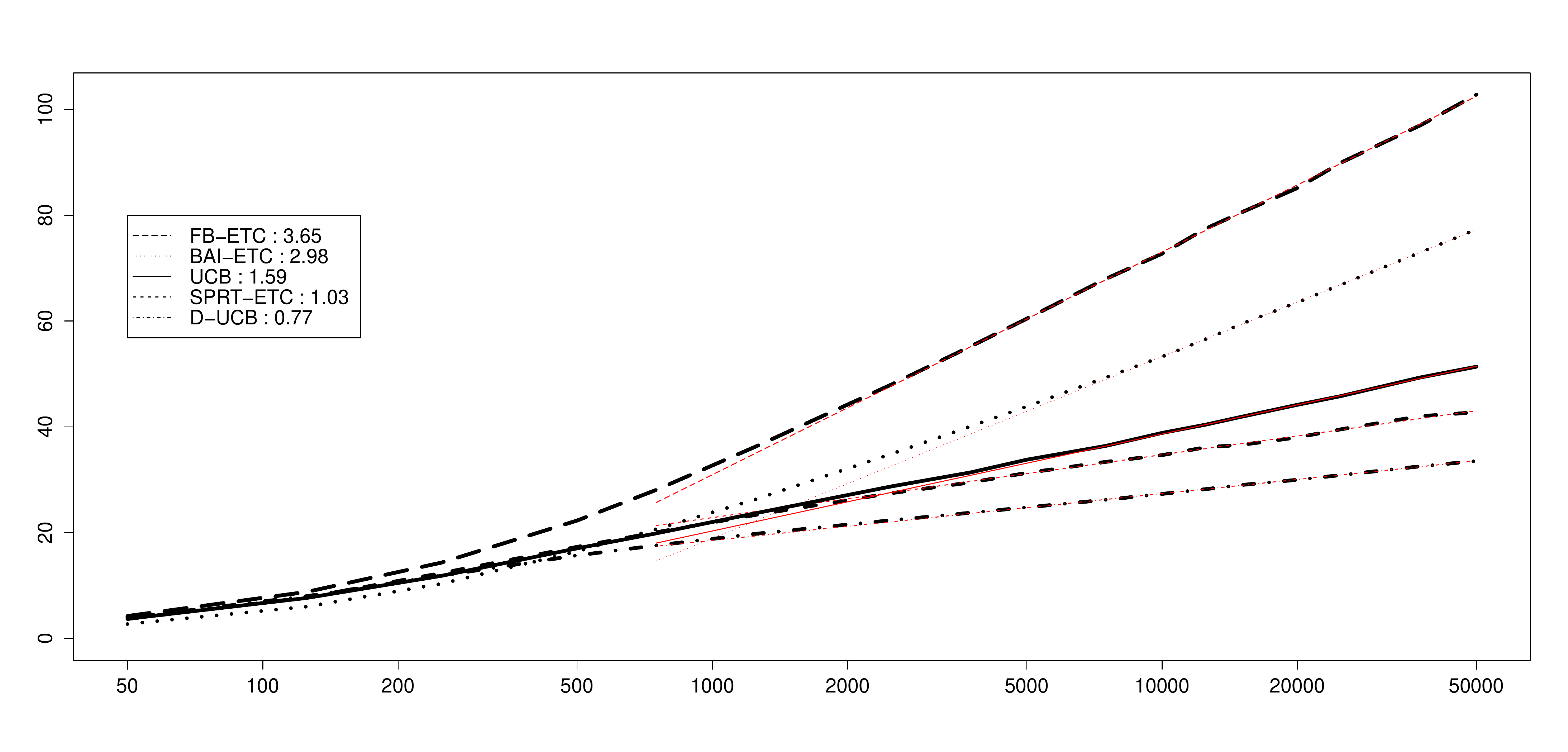} 

The experimental behavior of the algorithms reflects the theoretical results presented above:  the regret asymptotically grows as the logarithm of the horizon, the experimental coefficients correspond approximately to theory, and the relative ordering of the policies is respected.
However, it should be noted that for short horizons the hierarchy is not quite the same, and the growth rate is not logarithmic; this question is raised in~\cite{GMS16}.
\finalchanges{In particular, on short horizons the Best-Arm Identification procedure performs very well with respect to the others, and starts to be beaten (even by the gap-aware strategies) only when $T\Delta^2$ is much larger that $10$.}

\section{Conclusion: Beyond Uniform Exploration, Two Arms and Gaussian distributions}
It is worth emphasising the impossibility of non-trivial lower bounds on the regret of ETC strategies using any possible (non-uniform) sampling rule. Indeed, using UCB as a sampling rule together with an a.s. infinite stopping rule defines an artificial but formally valid ETC strategy that achieves the best possible rate for general strategies. This strategy is not a faithful counter-example to our claim that ETC strategies are sub-optimal, because UCB is not a satisfying exploration rule. If exploration is the objective, then uniform
sampling is known to be optimal in the two-armed Gaussian case \citep{COLT14}, which justifies the uniform sampling assumption. 


The use of ETC strategies for regret minimisation (e.g., as presented by~\cite{PerchetRigollet13Covariates}) is certainly not limited to 
bandit models with $2$ arms. The extension to multiple arms is based on the successive elimination idea in which a set of active arms is maintained
with arms chosen according to a round robin within the active set. Arms are eliminated from the active set once their optimality becomes 
implausible and the exploration phase terminates when the active set contains only a single arm (an example is by \cite{AO10}). 
The Successive Elimination algorithm has been introduced by \cite{EvenDaral06} for best-arm identification in the fixed-confidence setting. 
\finalchanges{It was shown to be rate-optimal, and thus a good compromise for both minimizing regret and finding the best arm.	If one looks more precisely at mutliplicative constants, however, \cite{GK16} showed that it is suboptimal for the best arm identification task in almost all settings except two-armed Gaussian bandits. Regarding regret minimization, the present paper shows that it is sub-optimal by a factor $2$ on every two-armed Gaussian problem.}

It is therefore interesting to investigate the performance in terms of regret of an ETC algorithm using an optimal BAI algorithm. 
This is actually possible not only for Gaussian distributions, but more generally for one-parameter exponential families, for which \cite{GK16} propose the asymptotically optimal Track-and-Stop strategy. Denoting $d(\mu,\mu') = \mathrm{KL}(\nu_{\mu},\nu_{\mu'})$ the Kullback-Leibler 
divergence between two distributions parameterised by $\mu$ and $\mu'$, they provide results which can be adapted to obtain the following bound. 

\begin{proposition}\label{thm:TaSRegret} For $\mu$ such that $\mu_1 > \max_{a\neq 1} \mu_a$, the regret of the ETC strategy using Track-and-Stop exploration with risk $1/T$ satisfies
\[\limsup_{T\rightarrow \infty} \frac{R_\mu^{\text{\scalebox{0.8}{\rm TaS}}}(T)}{\log T} \leq T^*(\mu) \left(\sum_{a=2}^K w_a^*(\mu)(\mu_1-\mu_a)\right),\]
where $T^*(\mu)$ (resp. $w^*(\mu)$) is the the maximum (resp. maximiser) of the optimisation problem 
\[\max_{w \in \Sigma_K} \inf_{a \neq 1} \left[w_1 d\left(\mu_1,\frac{w_1 \mu_1 + w_a \mu_a}{w_1 + w_a}\right) + w_a d\left(\mu_a,\frac{w_a \mu_1 + w_a \mu_a}{w_1 + w_a}\right)\right],\]
where $\Sigma_K$ is the set of probability distributions on $\{1,\dots,K\}$.
\end{proposition}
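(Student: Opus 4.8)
The plan is to derive the bound from the properties of the Track-and-Stop algorithm established in \cite{GK16}, using the ETC regret decomposition in its $K$-armed form. First I would record the generalisation of \eqref{regretETC}: with $\tau = \tau_{1/T}$ the Track-and-Stop stopping time at risk $\delta = 1/T$ and $\hat a$ the recommended arm, a suboptimal arm $a\neq 1$ is played $N_a(T) = N_a(\tau) + (T-\tau)_+\ind{\hat a = a}$ times when $\tau \le T$ and $N_a(T) \le N_a(\tau)$ otherwise, so in all cases $N_a(T) \le N_a(\tau) + T\ind{\hat a = a}$ and
\begin{align*}
R_\mu^{\text{TaS}}(T) \;=\; \sum_{a\neq 1}(\mu_1-\mu_a)\,\E_\mu[N_a(T)] \;\le\; \sum_{a\neq 1}(\mu_1-\mu_a)\,\E_\mu[N_a(\tau)] \;+\; T\sum_{a\neq 1}(\mu_1-\mu_a)\,\P_\mu(\hat a = a).
\end{align*}
The second sum is disposed of by the $\delta$-PAC guarantee: $\sum_{a\neq 1}\P_\mu(\hat a = a) = \P_\mu(\hat a \neq 1) \le \delta = 1/T$, so that $T\sum_{a\neq 1}(\mu_1-\mu_a)\P_\mu(\hat a = a) \le \max_{a\neq 1}(\mu_1-\mu_a) = O(1)$, which is negligible compared to $\log T$.

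It then remains to control $\E_\mu[N_a(\tau)]$ for $a\neq 1$. Here I would combine two facts from \cite{GK16}: (i) $\tau_\delta/\log(1/\delta) \to T^*(\mu)$ as $\delta\to 0$, both $\P_\mu$-almost surely and in $L^1(\P_\mu)$, so in particular $\{\tau_\delta/\log(1/\delta)\}_\delta$ is uniformly integrable; and (ii) the tracking rule drives the empirical play frequencies to the optimal weights, $N_a(t)/t \to w_a^*(\mu)$ $\P_\mu$-a.s., whence $N_a(\tau_\delta)/\tau_\delta \to w_a^*(\mu)$ since $\tau_\delta\to\infty$. Writing $N_a(\tau_\delta)/\log(1/\delta) = \big(\tau_\delta/\log(1/\delta)\big)\cdot\big(N_a(\tau_\delta)/\tau_\delta\big)$, the product converges a.s. to $T^*(\mu)w_a^*(\mu)$ and is dominated by the uniformly integrable sequence $\tau_\delta/\log(1/\delta)$ (since $N_a(\tau_\delta)/\tau_\delta \le 1$), hence converges in $L^1$ as well: $\E_\mu[N_a(\tau_\delta)] = (1+o(1))\,T^*(\mu)w_a^*(\mu)\log(1/\delta)$. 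Plugging $\delta = 1/T$ into the displayed inequality and summing yields
\begin{align*}
\limsup_{T\to\infty}\frac{R_\mu^{\text{TaS}}(T)}{\log T} \;\le\; \sum_{a=2}^K (\mu_1-\mu_a)\,T^*(\mu)\,w_a^*(\mu) \;=\; T^*(\mu)\left(\sum_{a=2}^K w_a^*(\mu)(\mu_1-\mu_a)\right),
\end{align*}
and the identification of $T^*(\mu)$ and $w^*(\mu)$ with the value and the maximiser of the stated max--min program is exactly the characterisation of the optimal sample complexity in \cite{GK16}.

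The main obstacle is the passage from (ii) to the expectation bound: one must genuinely combine the almost-sure convergence of the sampling frequencies with the convergence \emph{in expectation} of the normalised stopping time, and the a.s.\ statements alone are not enough — uniform integrability of $\tau_\delta/\log(1/\delta)$, a nontrivial byproduct of the concentration analysis of Track-and-Stop in \cite{GK16}, is what makes the interchange of limit and expectation legitimate. Everything else — writing the $K$-armed regret decomposition, bounding the commitment-error term via the $\delta$-PAC property, and the final summation over the suboptimal arms — is routine.
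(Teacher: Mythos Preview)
The paper does not actually prove this proposition: it is stated in the conclusion with the remark that the results of \cite{GK16} ``can be adapted to obtain the following bound'', and no further argument is given. Your proposal is a correct and natural way to carry out that adaptation. The $K$-armed ETC regret decomposition, the $\delta$-PAC bound disposing of the commitment-error term, and the identification $\E_\mu[N_a(\tau_\delta)]/\log(1/\delta)\to T^*(\mu)w_a^*(\mu)$ via the tracking property are exactly the right ingredients. The step you rightly single out as the only nontrivial one --- passing from the almost-sure convergences of $N_a(\tau_\delta)/\tau_\delta$ and $\tau_\delta/\log(1/\delta)$ to $L^1$ convergence of their product --- is valid: since $0\le N_a(\tau_\delta)/\log(1/\delta)\le \tau_\delta/\log(1/\delta)$ and the dominating sequence converges both $\P_\mu$-a.s.\ and in $L^1$ (both facts are proved in \cite{GK16}), the generalised dominated convergence theorem applies.

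One cosmetic point: the statement literally calls $T^*(\mu)$ ``the maximum'' of the displayed max--min program, but in \cite{GK16} --- and consistently with the paper's own computation $T^*=2(\sqrt{K-1}+1)^2/\Delta^2$ a few lines later --- the sample complexity is the \emph{reciprocal} of that value. Your argument uses $T^*(\mu)$ as the sample complexity throughout, which is clearly the intended reading; just be aware that your final sentence (``the identification of $T^*(\mu)$\ldots with the value\ldots'') inherits this imprecision from the statement.
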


In general, it is not easy to quantify the difference to the lower bound of Lai and Robbins
\[\liminf_{T\rightarrow \infty} \frac{R_\mu^{\pi} (T)}{\log T} \geq \sum_{a=2}^K\frac{\mu_1-\mu_a}{d(\mu_a,\mu_1)}.\]
Even for Gaussian distributions, there is no general closed-form formula for $T^*(\mu)$ and $w^*(\mu)$ except when $K=2$. 
However, we conjecture that the worst case is when $\mu_1$ and $\mu_2$ are much larger than the other means: then, the regret is almost the same as in the 2-arm case, and ETC strategies are suboptimal by a factor $2$. 
On the other hand, the most favourable case (in terms of relative efficiency) seems to be when $\mu_2 = \dots = \mu_K$: then 
\[w_1^*(\mu) = \frac{\sqrt{K-1}}{K-1 + \sqrt{K-1}}, \ \ \ \ \ \ w_2^*(\mu) = \dots = w_K^*(\mu) = \frac{1}{K-1 + \sqrt{K-1}}\]
and $T^* = {2(\sqrt{K-1} + 1)^2}/{\Delta^2}$, leading to 
\[\limsup_{T\rightarrow\infty}\frac{R_\mu^{\mathrm{TaS}}(T)}{\log (T)} \leq \left(1 + \frac{1}{\sqrt{K-1}}\right)\frac{2(K-1)}{\Delta},\]
while Lai and Robbins' lower bound yields $2(K-1)/\Delta$. 
Thus, the difference grows with $K$ as $2\sqrt{K-1}\log(T)/\Delta$ , but the relative difference decreases.

\bibliographystyle{plainnat}
\bibliography{all}

\begin{thebibliography}{22}
\providecommand{\natexlab}[1]{#1}
\providecommand{\url}[1]{\texttt{#1}}
\expandafter\ifx\csname urlstyle\endcsname\relax
  \providecommand{\doi}[1]{doi: #1}\else
  \providecommand{\doi}{doi: \begingroup \urlstyle{rm}\Url}\fi

\bibitem[Audibert and Bubeck(2009)]{AB09}
Jean-Yves Audibert and S{\'e}bastien Bubeck.
\newblock Minimax policies for adversarial and stochastic bandits.
\newblock In \emph{Proceedings of Conference on Learning Theory (COLT)}, pages
  217--226, 2009.

\bibitem[Auer and Ortner(2010)]{AO10}
Peter Auer and Ronald Ortner.
\newblock {UCB} revisited: Improved regret bounds for the stochastic
  multi-armed bandit problem.
\newblock \emph{Periodica Mathematica Hungarica}, 61\penalty0 (1-2):\penalty0
  55--65, 2010.

\bibitem[Auer et~al.(2002)Auer, Cesa-Bianchi, and Fischer]{ACF02}
Peter Auer, Nicol{\'o} Cesa-Bianchi, and Paul Fischer.
\newblock Finite-time analysis of the multiarmed bandit problem.
\newblock \emph{Machine Learning}, 47:\penalty0 235--256, 2002.

\bibitem[Bubeck and Liu(2013)]{BC13}
S{\'e}bastien Bubeck and Che-Yu Liu.
\newblock Prior-free and prior-dependent regret bounds for thompson sampling.
\newblock In \emph{Advances in Neural Information Processing Systems}, pages
  638--646, 2013.

\bibitem[Bubeck et~al.(2013)Bubeck, Perchet, and Rigollet]{BPR13}
S{\'{e}}bastien Bubeck, Vianney Perchet, and Philippe Rigollet.
\newblock Bounded regret in stochastic multi-armed bandits.
\newblock In \emph{Proceedings of the 26th Conference On Learning Theory},
  pages 122--134, 2013.

\bibitem[Capp{\'e} et~al.(2013)Capp{\'e}, Garivier, Maillard, Munos, and
  Stoltz]{CGMMS13}
Olivier Capp{\'e}, Aur{\'e}lien Garivier, Odalric-Ambrym Maillard, R{\'e}mi
  Munos, and Gilles Stoltz.
\newblock Kullback--{L}eibler upper confidence bounds for optimal sequential
  allocation.
\newblock \emph{The Annals of Statistics}, 41\penalty0 (3):\penalty0
  1516--1541, 2013.

\bibitem[Even-Dar et~al.(2006)Even-Dar, Mannor, and Mansour]{EvenDaral06}
Eyal Even-Dar, Shie Mannor, and Yishay Mansour.
\newblock {Action Elimination and Stopping Conditions for the Multi-Armed
  Bandit and Reinforcement Learning Problems}.
\newblock \emph{Journal of Machine Learning Research}, 7:\penalty0 1079--1105,
  2006.

\bibitem[Garivier and Kaufmann(2016)]{GK16}
Aur{\'e}lien Garivier and Emilie Kaufmann.
\newblock Optimal best arm identification with fixed confidence.
\newblock In \emph{Proceedings of the 29th Conference On Learning Theory (to
  appear)}, 2016.

\bibitem[Garivier et~al.(2016)Garivier, M{\'e}nard, and Stoltz]{GMS16}
Aur{\'e}lien Garivier, Pierre M{\'e}nard, and Gilles Stoltz.
\newblock Explore first, exploit next: The true shape of regret in bandit
  problems.
\newblock \emph{arXiv preprint arXiv:1602.07182}, 2016.

\bibitem[Hoorfar and Hassani(2008)]{LambertIneq}
Abdolhossein Hoorfar and Mehdi Hassani.
\newblock Inequalities on the lambert w function and hyperpower function.
\newblock \emph{J. Inequal. Pure and Appl. Math}, 9\penalty0 (2):\penalty0
  5--9, 2008.

\bibitem[Katehakis and Robbins(1995)]{KR95}
Michael~N Katehakis and Herbert Robbins.
\newblock Sequential choice from several populations.
\newblock \emph{Proceedings of the National Academy of Sciences of the United
  States of America}, 92\penalty0 (19):\penalty0 8584, 1995.

\bibitem[Kaufmann et~al.(2014)Kaufmann, Capp{\'e}, and Garivier]{COLT14}
Emilie Kaufmann, Olivier Capp{\'e}, and Aur\'elien Garivier.
\newblock {On the Complexity of A/B Testing}.
\newblock In \emph{{Proceedings of the 27th Conference On Learning Theory}},
  2014.

\bibitem[Lai(1987)]{Lai87}
Tze~Leung Lai.
\newblock Adaptive treatment allocation and the multi-armed bandit problem.
\newblock \emph{The Annals of Statistics}, pages 1091--1114, 1987.

\bibitem[Lai and Robbins(1985)]{LR85}
Tze~Leung Lai and Herbert Robbins.
\newblock Asymptotically efficient adaptive allocation rules.
\newblock \emph{Advances in applied mathematics}, 6\penalty0 (1):\penalty0
  4--22, 1985.

\bibitem[Lai et~al.(1983)Lai, Robbins, and Siegmund]{LRS83}
Tze~Leung Lai, Herbert Robbins, and David Siegmund.
\newblock Sequential design of comparative clinical trials.
\newblock In M.~Haseeb Rizvi, Jagdish Rustagi, and David Siegmund, editors,
  \emph{Recent advances in statistics: papers in honor of Herman Chernoff on
  his sixtieth birthday}, pages 51--68. Academic Press, 1983.

\bibitem[Lattimore(2015)]{Lat15-ucb}
Tor Lattimore.
\newblock Optimally confident {UCB}: Improved regret for finite-armed bandits.
\newblock Technical report, 2015.
\newblock URL \url{http://arxiv.org/abs/1507.07880}.

\bibitem[M{\"o}rters and Peres(2010)]{MP10}
Peter M{\"o}rters and Yuval Peres.
\newblock \emph{Brownian motion}, volume~30.
\newblock Cambridge University Press, 2010.

\bibitem[Perchet and Rigollet(2013)]{PerchetRigollet13Covariates}
Vianney Perchet and Philippe Rigollet.
\newblock The multi-armed bandit with covariates.
\newblock \emph{The Annals of Statistics}, 2013.

\bibitem[Perchet et~al.(2015)Perchet, Rigollet, Chassang, and
  Snowberg]{Perchet15Batched}
Vianney Perchet, Philippe Rigollet, Sylvain Chassang, and Eric Snowberg.
\newblock Batched bandit problems.
\newblock In \emph{Proceedings of the 28th Conference On Learning Theory},
  2015.

\bibitem[Siegmund(1985)]{Siegmund:SeqAn}
David Siegmund.
\newblock \emph{{Sequential Analysis}}.
\newblock Springer-Verlag, 1985.

\bibitem[Thompson(1933)]{Tho33}
William Thompson.
\newblock On the likelihood that one unknown probability exceeds another in
  view of the evidence of two samples.
\newblock \emph{Biometrika}, 25\penalty0 (3/4):\penalty0 285--294, 1933.

\bibitem[Wald(1945)]{Wald45SPRT}
Abraham Wald.
\newblock {Sequential Tests of Statistical Hypotheses}.
\newblock \emph{Annals of Mathematical Statistics}, 16(2):\penalty0 117--186,
  1945.

\end{thebibliography}


\appendix

\section*{Notation for the Proofs}

We denote by $(X_s)$ and $(Y_s)$ the sequence of successive observations from arm 1 and arm 2, so that \[\hat{\mu}_{1,s} = \frac{1}{s} \sum_{i=1}^s X_i \ \ \ \ \text{and} \ \ \ \ \hat{\mu}_{2,s} = \frac{1}{s} \sum_{i=1}^s Y_i \]

In the proofs of all regret upper bounds given in Appendix~\ref{proof:th:FB:DeltaKnown} to \ref{app:th:ucb} we assume without loss of generality 
that $\mu$ is such that $\mu_1 > \mu_2$ and let $\Delta = \mu_1 - \mu_2$.

\section{Proof of the Lower Bounds (Theorems~\ref{th:lowerBoundEC:DeltaKnown},~\ref{th:lowerBoundEC:DeltaUnKnown},~\ref{th:lowerBound:DeltaKnown} and Lai\&Robbins)}\label{sec:proofs:LB}

Let $\pi$ be a uniformly efficient strategy on some class $\cS$, as defined in \eqref{UniformEfficiency}, and let $\lambda\in\Hall$. 
If $m(\lambda) = \argmin \{\lambda_1, \lambda_2\}$, as $\E_\lambda[R_\mu^{\pi}(T)] = |\lambda_1-\lambda_2| \E_\lambda[N_{m(\lambda)}(T)] $ this implies in particular that
\[\forall \alpha \in ]0,1], \ \ \E_\lambda[N_{m(\lambda)}(T)] = o (T^\alpha).\] 

Without loss of generality we assume that $\mu_1 = \mu_2 + \Delta$ with $\Delta > 0$.
All the lower bounds are based on a change of measure argument, which involves considering an alternative reward vector $(\mu_1', \mu_2')$ that is
``not too far'' from $(\mu_1, \mu_2)$, but for which the expected behaviour of the algorithm is very different.
This is the same approach used by \cite{LR85}, but rewritten and generalised in a more powerful way (in particular regarding the ETC strategies). 
The improvements come thanks to
Inequality 4 in \citep{GMS16}, 
which states that for every $(\mu'_1,\mu'_2)\in\Hall$ and for every stopping time $\sigma$ such that $N_2(T)$ is $\mathcal{F}_\sigma$-measurable, 
\begin{align*}
	\E_\mu\big[N_1(\sigma)\big] \frac{(\mu'_1-\mu_1)^2}{2}  +\E_\mu\big[N_2(\sigma)\big] \frac{(\mu'_2-\mu_2)^2}{2}  \geq \kl\left(\E_\mu\!\left[\frac{N_2(T)}{T}\right] \!,\, \E_{\mu'}\!\left[\frac{N_2(T)}{T}\right]\right)\,,
\end{align*}
where $\kl(p,q)$ is the relative entropy between Bernoulli distributions with parameters $p, q \in [0,1]$ respectively.
Since $\kl(p,q)\geq (1-p)\log(1/(1-q)) -\log(2)$ for all $p,q\in(0,1)$, one obtains
\begin{align}
	&\E_\mu\big[N_1(\sigma)\big] \frac{(\mu'_1-\mu_1)^2}{2} +\E_\mu\big[N_2(\sigma)\big] \frac{(\mu'_2-\mu_2)^2}{2} \nonumber \\ 
	&\qquad \qquad\geq\left( 1-\frac{\E_\mu[N_2(T)]}{T} \right) \log\left(\frac{T}{\E_{\mu'}[N_1(T)]}\right) - \log(2)\,. \label{eq:tool:binf}
\end{align}

For $\mu' \in \cS$ such that $\mu_1' < \mu_2'$, $\E_\mu[N_2(T)] = o(T^\alpha)$ and $\E_{\mu'}[N_1(T)] = o(T^\alpha)$ for all $\alpha \in ]0,1]$, thus
\[\liminf_{T \rightarrow \infty} \frac{\E_\mu\big[N_1(\sigma)\big] (\mu'_1-\mu_1)^2/2 +\E_\mu\big[N_2(\sigma)\big] (\mu'_2-\mu_2)^2/2 }{\log T} \geq 1\;.\]

Let us now draw the conclusions in each setting. \finalchanges{Observe that while this argument is now routine for general policies, we show here how to apply it very nicely to ETC strategies as well.}

\begin{minipage}[t]{7.5cm}
	\noindent\textbf{Known gap, General strategy: $\cS= \HDelta$. }
	By choosing $\sigma=T$, $\mu'_1=\mu_1$ and $\mu'_2=\mu_1+\Delta=\mu_2+2\Delta$, we obtain 
	\[\liminf_{T\rightarrow \infty}\frac{\E_\mu\big[N_2(T)\big]}{\log T} \geq \frac{1}{(2\Delta)^2/2}\;. \]
\end{minipage}\qquad
\begin{minipage}[t]{5.5cm}
	\noindent\textbf{Unknown gap, General strategy: $\cS = \Hall$.}
	We use the same choices, except $\mu'_2=\mu_1+\epsilon$ for some $\epsilon>0$:
	\[\liminf_{T\rightarrow \infty}\frac{\E_\mu\big[N_2(T)\big]}{\log T} \geq \frac{1}{\left(\Delta+\epsilon\right)^2/2}\;.\]
\end{minipage}

\begin{minipage}[t]{7.5cm}
	\noindent\textbf{Known gap, ETC strategy: $\cS = \HDelta$. }
	For an ETC strategy $\pi$ with a stopping rule $\tau$, one has $N_1(\tau \wedge T)=N_2(\tau \wedge T)=(\tau\wedge T)/2$. Besides, 
	$N_2(T)$ is indeed $\mathcal{F}_{\tau \wedge T}$-measurable: after $\tau\wedge T$ draws, the agent knows whether she will draw arm $2$ for the last $T-\tau\wedge T$ steps or not.
	With $\mu_1' = \mu_2$, $ \mu_2' = \mu_1$, Inequality~\eqref{eq:tool:binf} thus yields:
	\[\liminf_{T\rightarrow \infty}\frac{\E_\mu\big[\frac{\tau\wedge T}{2}\big]}{\log T} \geq \frac{1}{2\Delta^2/2}\;. \]
\end{minipage}\qquad
\begin{minipage}[t]{5.5cm}
	\noindent\textbf{Unknown gap, ETC strategy: $\cS = \Hall$. }
	Choosing this time $\mu_1' = (\mu_1 + \mu_2 - \epsilon)/2$ and $\mu_2' = (\mu_1 + \mu_2 +	\epsilon)/2$, for some $\epsilon>0$ yields
	\[\liminf_{T\rightarrow \infty}\frac{\E_\mu\big[\frac{\tau\wedge T}{2}\big]}{\log T} \geq \frac{1}{\left(\frac{\Delta+\epsilon}{2}\right)^2}\;. \]
\end{minipage}

$R_\mu^{\pi}(T) =  \Delta \E_\mu[N_2(T)]$ for general strategies, while Equation~\eqref{regretETC} shows that  $R_\mu^{\pi}(T) \geq \Delta \E_\mu[(\tau\wedge T)/2]$ 
for ETC strategies. Therefore letting $\epsilon$ go to zero when needed shows that
\begin{align*}
	\liminf_{T\to\infty} \frac{R_\mu^{\pi}(T)}{\log(T)} \geq\frac{C_{\mathcal{S}}^\Pi}{\Delta}\;, \quad\text{for the value $C_{\mathcal{S}}^\Pi$ given at the end of Section~\ref{sec:notation}.}
\end{align*}

Note that the asymptotic lower bound of Theorem~\ref{th:FB:DeltaKnown} can also be proved by similar arguments, if one really wants to bring in an elephant to kill a mouse.
Moreover, note that this proof may also lead to (not-so-simple) non-asymptotic lower-bounds, as shown in~\cite{GMS16} for example.

\section{Proof of Theorem~\ref{th:FB:DeltaKnown}}\label{proof:th:FB:DeltaKnown}

Let $n\leq T/2$. 
The number of draws of the suboptimal arm $2$ is $N_2=n+(T-2n)\1\{S_n\leq 0\}$, where $S_n = (X_1-Y_1)+\dots+(X_n-Y_n)\sim \mathcal{N}(n\Delta,2n)$. 
The expected regret of the strategy using $2n$ exploration steps is
\begin{equation}\label{eq:decomp:regret} R_\mu^n(T) = \Delta \E_\mu[N_2] = \Delta \big(n + (T-2n) \P_\mu(S_n\leq 0) \big)\;.\end{equation}
But 
\[\P_\mu(S_n\leq 0)  = \P_\mu\left(\frac{S_n-n\Delta}{\sqrt{2n}}\leq \frac{-n\Delta}{\sqrt{2n}}=-\Delta\sqrt{\frac{n}{2}}\right)\;.\]
Denote by $\Phi$ (resp. $\phi$) the pdf (resp. cdf) of the standard Gaussian distribution, and recall that $W$ is the Lambert function defined for all $y>0$ by $W(y)\exp(W(y))=y$.
The regret is thus upper-bounded as $R_\mu^n(T) \leq  \Delta g(n)$ where, for all $x>0$, $g(x)=x+T\Phi(-\Delta\sqrt{x/2})$.
By differentiating $g$, one can see that its maximum is reached at $x^*$ such that
\[\phi\left(\Delta\sqrt{\frac{x^*}{2}}\right) = \frac{\Delta\sqrt{x^*/2}}{T\Delta^2/4}\;, \quad \text{ and thus}\quad  x^*=\frac{2}{\Delta^2}W\left(\frac{T^2\Delta^4}{32\pi}\right)\;.\]
By choosing $\overline{n}=\lceil x^*\rceil$, we obtain
\[R_\mu^{\overline{n}}(T) \leq \frac{2}{\Delta}W\left(\frac{T^2\Delta^4}{32\pi}\right) + 
\Delta + \Delta T \Phi\left(-\Delta\sqrt{\frac{x^*}{2}}\right) \;.\]
As $g({\overline{n}})\leq g(x^*)+1\leq g(0)+1=T/2+1$, $R_\mu^{\overline{n}}(T)\leq (T/2+1)\Delta\leq 2.04\sqrt{T}+\Delta$ for $T\Delta^2\leq 4\sqrt{2\pi e}$.
If $T\Delta^2\geq 4\sqrt{2\pi e}$, the inequality $W(y)\leq \log\big((1+e^{-1})y/\log(y)\big)$ valid for all $y\geq e$ (see~\cite{LambertIneq}) entails
\[W\left(\frac{T^2\Delta^4}{32\pi}\right)\leq \log\left(\frac{(1+e^{-1})\frac{T^2\Delta^4}{32\pi}}{\log\left(\frac{T^2\Delta^4}{32\pi}\right)}\right) 
= 2\log\left(\frac{1}{8}\sqrt{\frac{1+e^{-1}}{\pi}}\frac{T\Delta^2}{\sqrt{\log\left(\frac{T\Delta^2}{4\sqrt{2\pi}}\right)}}\right) \]
In addition, the classic bound on the Gaussian tail $\Phi(-y)\leq \phi(y)/y$ yields by definition of $x^*$:
\[\Phi\left(-\Delta\sqrt{\frac{x^*}{2}}\right) \leq \frac{\phi\left(-\Delta\sqrt{\frac{x^*}{2}}\right)}{\Delta\sqrt{\frac{x^*}{2}} }  = \frac{4}{T\Delta^2}\;.\]
Hence, for $T\Delta^2\geq 4\sqrt{2\pi e}$,
\[R_\mu^{\overline{n}}(T) \leq \frac{4}{\Delta}\log\left(\frac{e}{8}\sqrt{\frac{1+e^{-1}}{\pi}}\frac{T\Delta^2}{\sqrt{\log\left(\frac{T\Delta^2}{4\sqrt{2\pi}}\right)}}\right) + \Delta
< \frac{4}{\Delta}\log\left(\frac{T\Delta^2}{4.46\sqrt{\log\left(\frac{T\Delta^2}{4\sqrt{2\pi}}\right)}}\right)+ \Delta\;.\]
To complete the proof of the uniform upper-bound, we start from
\[R_\mu^{\overline{n}}(T) \leq \frac{2}{\Delta}W\left(\frac{T^2\Delta^4}{32\pi}\right) + \frac{4}{\Delta} + \Delta\;.\]
Denoting by $r\approx 1.09$ the root of $(4r-1)W(r)=2$, the maximum of $2W\left(\frac{T^2\Delta^4}{32\pi}\right)/\Delta + 4/\Delta $ is reached at $\underline{\Delta}=\left(32\pi r/T^2\right)^{1/4}$, and is equal to 
\[\frac{2}{\underline{\Delta}}W\left(\frac{T^2\underline{\Delta}^4}{32\pi}\right) + \frac{4}{\underline{\Delta}} = \frac{8r^{3/4}}{(4r-1)(2\pi)^{1/4}} \sqrt{T}< 2\sqrt{T}\;.\]

\begin{figure}
\includegraphics[width=\textwidth]{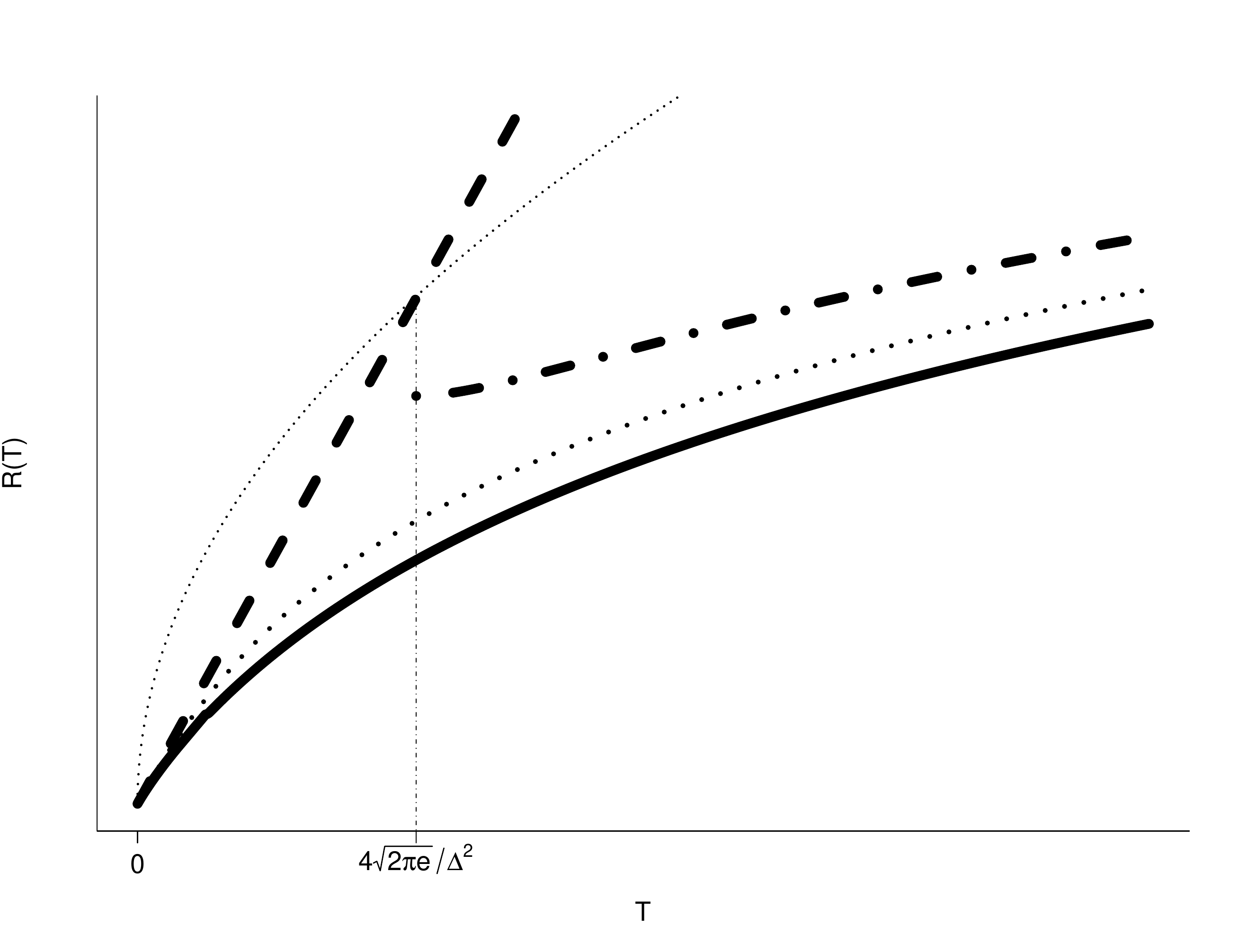}
\caption{Regret of the Fixed-Budget ETC algorithm with optimal $n$, for $\Delta=1/5$ (solid line). The linear upper bound $T\Delta/2$ and the logarithmic bound of Theorem~\ref{th:FB:DeltaKnown} are dashed, bold lines. The thin, dotted line is $2.04\sqrt{T}+\Delta$. The bold, dotted line is $\Delta g(\overline{n})$, with $g$ and $\overline{n}$ as in the proof.}
\label{fig:FBETC_DeltaKnown}
\end{figure}

Let us now prove that choosing $n$ too small leads to catastrophic regret. If $n\leq 4(1-\epsilon)\log(T)/\Delta^2$, then Equation~\eqref{eq:decomp:regret} yields
\begin{align*}
\P_\mu(S_n\leq 0) & \geq \frac{1}{\Delta\sqrt{\pi n}} \left( 1 - \frac{2}{n\Delta^2} \right)\exp\left(-\frac{\Delta^2 n}{4}\right)\\
& \geq  \left( 1 - \frac{2}{n\Delta^2} \right)\frac{1}{2\sqrt{\pi(1-\epsilon)\log(T)}}\exp\big(-(1-\epsilon)\log(T)\big)\\
& \geq \left( 1 - \frac{2}{n\Delta^2} \right)  \frac{T^{\epsilon-1}}{2\sqrt{\pi\log(T)}}\;.
\end{align*}
Thus, the expected regret is lower-bounded as 
\[R_\mu^n(T)  \geq \Delta (T-2n) \P_\mu(S_n\leq 0) \geq \Delta\left( 1 - \frac{2}{n\Delta^2} \right) \left(1-\frac{8\log(T)}{\Delta^2T}\right)\frac{T^\epsilon}{2\sqrt{\pi\log(T)}} \;.\]
Let us now turn to the last statement of the theorem. The previous inequality shows that for all $\epsilon>0$, 
\[ \liminf_{T} \inf_{\frac{3}{\Delta^2} < n \leq \frac{4(1-\epsilon)\log(T)}{\Delta^2}} \frac{R_\mu^n(T)}{\log(T)} =+\infty\;. \]
For $n\leq 3/\Delta^2$, we have that
\[\P_\mu(S_n\leq 0)  \geq  \P_\mu\left(\frac{S_n-n\Delta}{\sqrt{2n}}\leq -\sqrt{\frac{3}{2}} \right) > 0\;,\]
and hence that $\liminf_T\inf_{n\leq 3/\Delta^2} R_\mu^n(T)/T >0$. As $R_\mu^n(T)\geq n\Delta$, the result follows.

\section{Proof of Theorem~\ref{th:SPRT:DeltaKnown}}\label{proof:th:SPRT:DeltaKnown}

Recall that $\mu_1>\mu_2$. Using \eqref{regretETC}, one has
\[R_\mu^\pi(T)  \leq  \Delta\E_\mu\left[\frac{\tau}{2}\right]+ T\Delta\, \P_\mu(\tau< T,\hat{a}=2)
\;.\]
If $T\Delta^2\leq 1$, then $\tau=2$ and $\hat a$ is based on a single sample from each action. 
Therefore $\hat{a}=2$ with probability less than $1/2$ and the regret is upper-bounded by $\Delta +T\Delta/2$. 
Otherwise, let $S_0=0$, $S_n = (X_1-Y_1) +\dots+ (X_n-Y_n)$ for every $n\geq 1$. 
For every $u>0$, let $n_u=(\log(T\Delta^2)+u)/\Delta^2$. 
Observe that
\[\Big\{\frac{\tau}{2}\geq  \left\lceil\frac{\log(T\Delta^2)+u}{\Delta^2}\right\rceil\Big\} \subset \Big\{S_{\lceil n_u\rceil} \leq \frac{\log(T\Delta^2)}{\Delta} \Big\} 
\;. \]
Moreover, if $n_u\in\N$ then $S_{n_u} \sim \mathcal{N}(n_u\Delta, 2n_u)$ and
\begin{align*}
&\P_\mu\left( S_{n_u} \leq \frac{\log(T\Delta^2)}{\Delta}  \right) = \P_\mu\left( \frac{S_{n_u}-n_u\Delta}{\sqrt{2n_u}}  \leq \frac{\log(T\Delta^2)/\Delta - \Delta (\log(T\Delta^2)+u)/\Delta^2 }{\sqrt{2(\log(T\Delta^2)+u)/\Delta^2}} \right)\\
&\hspace{1.6cm}= \P_\mu\left( \frac{S_{n_u}-n_u\Delta}{\sqrt{2n_u}}  \leq\frac{-u}{\sqrt{2(\log(T\Delta^2)+u)}}\right)\leq \exp\left( -\frac{u^2}{4\big(\log(T\Delta^2)+u\big)}\right)\;.
\end{align*}
Hence, for $a=2\sqrt{\log(T\Delta^2)}$,
\begin{align*}
\int_{n_a}^\infty \P_\mu\left(\frac{\tau}{2}-1 \geq v\right)\,dv &= 
\int_{a}^\infty \P_\mu\left(\frac{\tau}{2}-1 \geq \frac{\log(T\Delta^2)+u}{\Delta^2}\right)\,\frac{du}{\Delta^2}\\
&\leq \int_{a}^\infty \P_\mu\left(\frac{\tau}{2} \geq \left\lceil \frac{\log(T\Delta^2)+u}{\Delta^2}\right)\right\rceil\,\frac{du}{\Delta^2}\\
& \leq \frac{1}{\Delta^2}\int_a^\infty \exp\left( -\frac{u^2}{4\big(\log(T\Delta^2)+u\big)}\right) du\\
&\leq \frac{1}{\Delta^2}\int_a^\infty \exp\left( -\frac{u}{2\sqrt{\log(T\Delta^2)}+4}\right) du \qquad\text{ as } \log(T\Delta^2)\leq u\sqrt{\log(\Delta^2T)}/2\\
&\leq \frac{1}{\Delta^2}\int_0^\infty \exp\left( -\frac{u}{2\sqrt{\log(T\Delta^2)}+4}\right) du \\
& = \frac{2\sqrt{\log(T\Delta^2)}+4}{\Delta^2} \;,
\end{align*}
and 
\[
\E_\mu\left[\frac{\tau}{2}\right]\leq 1+ n_a + \int_{n_a}^\infty \P_\mu\left(\frac{\tau}{2}-1 \geq v\right)\,dv 
\leq 1+\frac{\log(T\Delta^2)+4\sqrt{\log(T\Delta^2)}+4}{\Delta^2} \;.\]

To conclude the proof of the first statement, it remains to show that $\P(\tau <T,\hat{a}=2) \leq 1/(T\Delta^2)$.
Since $X_1-Y_1\sim \mathcal{N}(\Delta, 2)$, $\E_\mu[\exp(-\Delta (X_1-Y_1))]=\exp(-\Delta^2 + 2\Delta^2/2)=1$ and $M_n = \exp(-\Delta S_n)$ is a martingale. Let $\tau_2 = T \wedge \inf\{n\geq 1: S_n\leq -\log(T\Delta^2)/\Delta\}$, and observe that
\[\big\{\tau < T, \hat{a}=2\big\} \subset \left\{\exists n<T: S_n \leq -\frac{\log(T\Delta^2)}{\Delta} \right\} = \big\{\tau_2<T\big\}\;.\]
Doob's optional stopping theorem yields $\E_\mu[M_{\tau_2}] = \E_\mu[M_0] = 1$. 
But as $M_{\tau_2}=\exp(-\Delta S_{\tau_2})\geq \exp(\Delta \log(T\Delta^2)/\Delta)=T\Delta^2$ on the event $\{\tau_2<T\}$, 
\[ \P_\mu(\tau < T, \hat{a}=2)\leq \P_\mu(\tau_2<T) \leq \E_\mu\left[\1\{\tau_2<T\} \frac{M_{\tau_2}}{T\Delta^2}\right] \leq \frac{\E_\mu\big[ M_{\tau_2}\big]}{T\Delta^2} = \frac{1}{T\Delta^2}\;.\]

The last statement Theorem~\ref{th:SPRT:DeltaKnown} is obtained by maximising the bound (except for the $\Delta$ summand). Let $u=1/(\Delta\sqrt{T}$) and  $f(u) = -u\log(u/\sqrt{e}) + 2u\sqrt{-\log(u)}$.
Denoting $\ell=-\log(u)$, $f'(u)=\ell-1/2+2\sqrt{\ell}-1/\sqrt{\ell}=(\sqrt{\ell}+2)(\ell-1/2)/\sqrt{\ell}$, thus the maximum is reached at $\ell=\log\big(\Delta\sqrt{T}\big)=1/2$ and $\Delta = \sqrt{e/T}$. Re-injecting this value into the bound, we obtain that for every $\Delta>0$,
\[R_\mu^{\sprtetc}(T)\leq\frac{2+ 4\sqrt{1}+4}{\sqrt{e/T}} = 10 \sqrt{\frac{T}{e}}\;.\]

\section{Proof of Theorem~\ref{th:optimBAI:DeltaUnKnown}}\label{proof:th:optimBAI:DeltaUnKnown}

Recall that $\mu_1 = \mu_2 + \Delta$ with $\Delta > 0$.
Let $W_s = (X_s - Y_s - \Delta) / \sqrt{2}$, which means that $W_1,W_2,\ldots$ are i.i.d.\ standard Gaussian random variables.
Introducing $\mathcal{F} = (\hat{a} \neq 1, \tau < T)$, one has by \eqref{regretETC} that \[R^{\baietc}_\mu(T) \leq T\Delta \P_\mu(\mathcal{F}) + (\Delta/2) \E_\mu[\tau\wedge T].\]
From the definition of $\tau$ and Lemma~\ref{lem:conc}.(c) in Appendix \ref{app:tech}, assuming that $T\Delta^2 \geq 4e^2$, one obtains
\begin{align*}
\P_\mu\left(\mathcal{F}\right)
&\leq\P_\mu\left(\exists s : 2s \leq T , \ \  \hat \mu_{1,s} - \hat \mu_{2,s} \leq -\sqrt{\frac{4}{s} \log\left(\frac{T}{2s}\right)}\right) \\
&=\P_\mu\left(\exists s \leq T/2 : \frac{\sum_{i=1}^s W_i}{s} \leq -\sqrt{\frac{2}{s} \log\left(\frac{T/2}{s}\right)} - \frac{\Delta}{\sqrt{2}}\right) \\
&\leq \frac{120e\sqrt{\log(\Delta^2 T/4)}}{\Delta^2 T} + \frac{64e}{\Delta^2T}\,. 
\end{align*}

The last step is bounding $\E_\mu[\tau\wedge T]$ for which as $T\Delta^2 \geq 4$, Lemma \ref{lem:conc}.(b) yields  
\begin{align*} 
\E_\mu[\tau\wedge T] 
&= \sum_{t=1}^{T} \P_\mu\left(\tau \geq t\right) \leq 2 + 2\sum_{s=1}^{T/2} \P_\mu\left(\tau \geq 2s+1\right) \\ 
&\leq 2+2\sum_{s=1}^{T/2} \P_\mu\left(\frac{\sum_{i=1}^s W_i}{s} \leq \sqrt{\frac{2}{s} \log\left(\frac{T/2}{s}\right)} - \frac{\Delta}{\sqrt{2}}\right) \\
&\leq \frac{8\log\left(\frac{T\Delta^2}{4}\right)}{\Delta^2} + \frac{8\sqrt{\pi\log\left(\frac{T\Delta^2}{4}\right)}}{\Delta^2} + \frac{8}{\Delta^2} + 4\,,
\end{align*}
Therefore, if $T\Delta^2 \geq 4e^2$,
\begin{align*}
R_\mu^{\baietc}(T) &\leq \frac{4\log\left(\frac{T\Delta^2}{4}\right)}{\Delta} + \frac{4\sqrt{\pi\log\left(\frac{T\Delta^2}{4}\right)}}{\Delta} + \frac{4}{\Delta} + 2\Delta + \frac{120e\sqrt{\log\left(\frac{\Delta^2 T}{4}\right)}}{\Delta} + \frac{64e}{\Delta} \\
& \leq \frac{4\log\left(\frac{T\Delta^2}{4}\right)}{\Delta} + \frac{334\sqrt{\log\left(\frac{T\Delta^2}{4}\right)}}{\Delta} + \frac{178}{\Delta} + 2\Delta.
\end{align*}

Taking the limit as $T \to \infty$ shows that $\limsup_{T\to\infty} R^{\baietc}_\mu(T) / \log(T) \leq 4$.
Noting that $R^{\baietc}_\mu(T) \leq \Delta T$ and taking the minimum of this bound and the finite-time bound given above leads arduously to $R^{\baietc}_\mu(T) \leq 2\Delta + 32\sqrt{T}$ for all $\mu$.

\section{Proof of Theorem~\ref{th:GAUCB:Deltaknown}}\label{proof:th:GAUCB:Deltaknown}


Define random time $\tau = \max\set{\tau_1, \tau_2}$ where $\tau_i$ is given by
\begin{align*}
\tau_i = \min\set{t \leq T : \sup_{s \geq t} \left|\hat \mu_{i,s} - \mu_i\right| < \epsilon_T}\,.
\end{align*}
By the concentration Lemma \ref{lem:conc}.(a) in Appendix \ref{app:tech} we have $\E_\mu[\tau_i] \leq 1 + 9/\epsilon_T^2$
and so $\E_\mu[\tau] \leq \E_\mu[\tau_1 + \tau_2] \leq 2 + 18/\epsilon_T^2$. 
For $t > 2\tau$ we have $|\hat \mu_{A_{t,\max}}(t-1) - \mu_{A_{t,\max}}| < \epsilon_T$.
Therefore the expected number of draws of the suboptimal arm may be bounded by
\begin{align}
\E_\mu&[N_2(T)]
= \E_\mu\left[\sum_{t=1}^T \ind{I_t = 2}\right]
\leq \E_\mu[2\tau] + \E_\mu\left[\sum_{t=2\tau+1}^T \ind{I_t = 2}\right] \nonumber \\
&\leq \E_\mu[2\tau] + \E_\mu\left[\sum_{t=1}^T \ind{\hat \mu_2(t-1) + \sqrt{\frac{2 \log(T/N_2(t-1))}{N_2(t-1)}} \geq \mu_1 + \Delta - 3\epsilon_T \text{ and } I_t = 2}\right]
 \nonumber \\ 
&+ \E_\mu\left[\sum_{t=1}^T \ind{\hat \mu_1(t-1) + \sqrt{\frac{2 \log(T/N_1(t-1))}{N_1(t-1)}} \leq \mu_2 + \Delta - \epsilon_T = \mu_1 - \epsilon_T}\right] \label{eq:split}
\end{align}
By Lemma \ref{lem:conc}.(b), whenever $T(2\Delta - 3\epsilon_T)^2\geq 2$, 
\begin{align*}
&\E_\mu\left[\sum_{t=1}^T \ind{\hat \mu_2(t-1) + \sqrt{\frac{2\log(T/N_2(t-1))}{N_2(t-1)}} \geq \mu_1 + \Delta - 3\epsilon_T = \mu_2 + 2\Delta - 3\epsilon_T \text{ and } I_t = 2}\right] \\
& \qquad\leq \sum_{s=1}^T \P_\mu\left(\hat{\mu}_{2,s} - \mu_2 + \sqrt{\frac{2\log(T/s)}{s}} \geq 2\Delta - 3\epsilon_T\right) \\
&\qquad\leq \frac{2\log\left(\frac{T(2\Delta - 3\epsilon_T)^2}{2}\right)}{(2\Delta - 3\epsilon_T)^2} + \frac{2\sqrt{\pi\log\left(\frac{T(2\Delta - 3\epsilon_T)^2}{2}\right)}}{(2\Delta - 3\epsilon_T)^2} + \frac{2}{(2\Delta - 3\epsilon_T)^2}+1.
\end{align*}
For the second term in (\ref{eq:split}) we apply Lemma \ref{lem:conc}.(c) to obtain, whenever $T\epsilon_T^2\geq e^2$,
\begin{align*}
&\E_\mu\left[\sum_{t=1}^T \ind{\hat \mu_1(t-1) + \sqrt{\frac{2 \log(T/N_1(t-1))}{N_1(t-1)}} \leq \mu_1 - \epsilon_T}\right] \\
&\qquad\leq T \P_\mu\left(\exists s \leq T : \hat \mu_{1,s} + \sqrt{\frac{2}{s} \log(T/s)} \leq \mu_1 - \epsilon_T\right) 
\leq \frac{30e \sqrt{\log(\epsilon^2_T T)}}{\epsilon_T^2} + \frac{16e}{\epsilon_T^2}\,.
\end{align*}
Therefore, if  $T(2\Delta - 3\epsilon_T)^2\geq 2$ and $T \epsilon_T^2 \geq e^2$, 
\begin{align*}
\E_\mu[N_2(T)] &\leq \frac{2\log\left(\frac{T(2\Delta - 3\epsilon_T)^2}{2}\right)}{(2\Delta - 3\epsilon_T)^2} + \frac{2\sqrt{\pi\log\left(\frac{T(2\Delta - 3\epsilon_T)^2}{2}\right)}}{(2\Delta - 3\epsilon_T)^2} +  \frac{30e \sqrt{\log(\epsilon^2_T T)}}{\epsilon_T^2} \\
& \qquad + \frac{80}{\epsilon_T^2}+ \frac{2}{(2\Delta - 3\epsilon_T)^2} + 5. 
\end{align*}

The first result follows since the regret is $R_\mu^{\ducb}(T) = \Delta \E_\mu[N_2(T)]$. For the second it easily noted that for the choice of $\epsilon_T$ given in
the definition of the algorithm that $\limsup_{T \to\infty} \E_\mu[N_2(T)] / \log(T) \leq 1/(2 \Delta^2)$.
Therefore $\limsup_{T\to\infty} R_\mu^{\ducb}(T) / \log(T) \leq 1/(2\Delta)$.
The third result follows from a laborious optimisation step to upper-bound the minimum of $T\Delta$ and the finite-time regret bound above.

\section{Proof of Theorem \ref{th:ucb}}\label{app:th:ucb}

For any $\epsilon \in (0,\Delta)$ we have
\begin{align*}
\E_\mu[N_2(T)] 
&= \E_\mu\left[\sum_{t=1}^T \ind{A_t = 2}\right] \\
&\leq \E_\mu\left[\sum_{t=1}^T \ind{A_t = 2 \text{ and } \hat \mu_2(t-1) + \sqrt{\frac{2}{N_2(t-1)} \log\left(\frac{T}{N_2(t-1)}\right)} \geq \mu_1 - \epsilon}\right]\\
  &\quad+ T \P_\mu\left(\exists s \leq T : \hat \mu_{1,s} + \sqrt{\frac{2}{s} \log\left(\frac{T}{s}\right)} \leq \mu_1 - \epsilon\right)\,.
\end{align*}
By the concentration Lemma \ref{lem:conc}.(b) in Appendix \ref{app:tech} we have, whenever $T(\Delta-\epsilon)^2 \geq 2$,
\begin{align*}
&\E_\mu\left[\sum_{t=1}^T \ind{A_t = 2 \text{ and } \hat \mu_2(t-1) + \sqrt{\frac{2}{N_2(t-1)} \log\left(\frac{T}{N_2(t-1)}\right)} \geq \mu_1 - \epsilon}\right] \\
&\quad\leq \sum_{s=1}^T \P_\mu\left(\hat \mu_{2,s} - \mu_2 + \sqrt{\frac{2}{s} \log\left(\frac{T}{s}\right)} \geq \Delta - \epsilon\right) \\
&\quad\leq \frac{2\log\left(\frac{T(\Delta - \epsilon)^2}{2}\right)}{(\Delta - \epsilon)^2} + \frac{2\sqrt{\pi\log\left(\frac{T(\Delta - \epsilon)^2}{2}\right)}}{(\Delta - \epsilon)^2} + \frac{2}{(\Delta - \epsilon)^2} +1
\end{align*}
For the second term we apply Lemma \ref{lem:conc}.(c) to obtain, whenever $T\epsilon^2 \geq e^2$, 
\begin{align*}
T \P_\mu\left(\exists s \leq T: \hat \mu_{1,s} + \sqrt{\frac{2}{s} \log\left(\frac{T}{s}\right)} \leq \mu_1 - \epsilon\right) 
&\leq \frac{30e \sqrt{\log(\epsilon^2 T)}}{\epsilon^2} + \frac{16e}{\epsilon^2}\,.
\end{align*}

Finally, if $T(\Delta - \epsilon)^2 \geq 2$ and $T\epsilon^2 \geq e^2$, 
\begin{align*}
&R^{\ucb}_\mu(T) 
\leq \Delta \E_\mu[N_2(T)] \\
& \leq \frac{2\log\left(\frac{T(\Delta - \epsilon)^2}{2}\right)}{\Delta\left(1 - \epsilon/\Delta\right)^2} + \frac{2\sqrt{\pi\log\left(\frac{T(\Delta - \epsilon)^2}{2}\right)}}{\Delta\left(1 - \epsilon/\Delta\right)^2} + \frac{2}{\Delta\left(1 - \epsilon/\Delta\right)^2} +\Delta +  \Delta\left(\frac{30e \sqrt{\log(\epsilon^2 T)}}{\epsilon^2} + \frac{16e}{\epsilon^2}\right)
\end{align*}
The asymptotic result follows by taking the limit as $T$ tends to infinity and choosing $\epsilon = \log^{-\frac{1}{8}}(T)$ while
the minimax result follows by finding the minimum of the finite-time bound given above and the naive $R^{\ucb}_\mu(T) \leq T \Delta$.

\section{Deviation Inequalities}\label{app:tech}

As was already the case in the proof of Theorem~\ref{th:FB:DeltaKnown}, we heavily rely on the following well-known inequality on the tail of a Gaussian distribution: if $X \sim \mathcal N(0, 1)$, then for all $x>0$
\[\P\left(X \geq x\right) \leq \min\set{1, \frac{1}{x \sqrt{2\pi}}} \exp\left(-x^2/ 2\right)\;.\]
Lemma~\ref{lem:conc} gathers some more specific results that are useful in our regret analyses, and that we believe to be of a certain interest on their own. 


\begin{lemma}\label{lem:conc}
Let $\epsilon > 0$ and $\Delta > 0$ and $W_1,W_2,\ldots$ be standard i.i.d.\ Gaussian random variables and $\hat \mu_t = \sum_{s=1}^t W_s / t$.
Then the following hold:
\begin{align*}
&\text{(a).}\quad \E\left[\min\set{t : \sup_{s \geq t} |\hat \mu_s| \geq \epsilon}\right] \leq 1 + 9/\epsilon^2 \\
&\text{(b).}\quad \text{ if } T\Delta^2\geq 2 \text{ then } \sum_{n=1}^T \P\left(\hat \mu_n + \sqrt{\frac{2}{n} \log\left(\frac{T}{n}\right)} \geq \Delta\right) \leq 
\frac{2 \log\left(\frac{T\Delta^2}{2}\right)}{\Delta^2} + \frac{2\sqrt{\pi\log\left(\frac{T\Delta^2}{2}\right)}}{\Delta^2}   + \frac{2}{\Delta^2} + 1\\
&\text{(c).}\quad \text{ if } T\epsilon^2 \geq e^2 \text{ then }  \P\left(\exists s \leq T : \hat \mu_s + \sqrt{\frac{2}{s} \log\left(\frac{T}{s}\right)} + \epsilon \leq 0\right)
\leq \frac{30e \sqrt{\log(\epsilon^2 T)}}{\epsilon^2 T} + \frac{16e}{\epsilon^2 T}
\end{align*}
\end{lemma}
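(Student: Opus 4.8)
The plan is to treat the three parts separately, in each case reducing to the Gaussian tail bound $\P(X\geq x)\leq \min\{1,(x\sqrt{2\pi})^{-1}\}e^{-x^2/2}$ recalled just above, together with a maximal inequality for the random walk $S_s = \sum_{i=1}^s W_i$ when a supremum over $s$ is involved.

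For part (a), I would write $\tau = \min\{t : \sup_{s\geq t}|\hat\mu_s|\geq\epsilon\}$ and bound $\E[\tau] = \sum_{t\geq 1}\P(\tau\geq t) \leq 1 + \sum_{t\geq 1}\P(\exists s\geq t : |\hat\mu_s|\geq\epsilon)$. The event $\{\exists s\geq t : |\hat\mu_s|\geq\epsilon\}$ is $\{\exists s\geq t : |S_s|\geq s\epsilon\}$; I would apply a peeling argument over dyadic blocks $s\in[2^k,2^{k+1})$ and use a maximal inequality (Doob or the reflection-type bound $\P(\max_{s\leq m}S_s \geq a)\leq 2\P(S_m\geq a)$, or the exponential martingale $\exp(\lambda S_s - \lambda^2 s/2)$) to get a geometrically decaying sum in $k$, then sum the resulting series and optimise the constant to land at $9/\epsilon^2$.

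For part (b), the key observation is that $\hat\mu_n + \sqrt{(2/n)\log(T/n)}\geq\Delta$ is equivalent to $W$-tail of the form $\P(\hat\mu_n \geq \Delta - \sqrt{(2/n)\log(T/n)})$, and this probability is only nontrivially small once $n$ exceeds roughly $n_0 := 2\log(T\Delta^2/2)/\Delta^2$. I would split the sum at $n_0$: for $n\leq n_0$ bound each term by $1$, contributing the leading $2\log(T\Delta^2/2)/\Delta^2$; for $n > n_0$ use the Gaussian tail $\P(\hat\mu_n\geq x)\leq e^{-nx^2/2}$ (with $x = \Delta - \sqrt{(2/n)\log(T/n)}$, which is positive in this range thanks to $T\Delta^2\geq 2$), expand $nx^2/2 = n\Delta^2/2 - \Delta\sqrt{2n\log(T/n)} + \log(T/n)$ so $e^{-nx^2/2} = (n/T)\,e^{-n\Delta^2/2}e^{\Delta\sqrt{2n\log(T/n)}}$, and bound the tail sum by an integral that evaluates (after a change of variables and the standard $\int_0^\infty e^{-u^2}du = \sqrt\pi/2$ type estimate) to something of order $\sqrt{\pi\log(T\Delta^2/2)}/\Delta^2 + 2/\Delta^2$. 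Keeping track of the constants carefully gives exactly the claimed bound; this bookkeeping is the most delicate part of (b).

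For part (c), I would again pass to the random walk: $\{\exists s\leq T : \hat\mu_s + \sqrt{(2/s)\log(T/s)} + \epsilon \leq 0\}$ becomes $\{\exists s\leq T : -S_s \geq s\epsilon + \sqrt{2s\log(T/s)}\}$. By symmetry this is $\{\exists s\leq T : S_s \geq s\epsilon + \sqrt{2s\log(T/s)}\}$. I would peel over dyadic ranges $s\in[2^k, 2^{k+1})$, on each apply a maximal/union bound so that the threshold can be taken at the left endpoint while the $s\epsilon$ and $\log(T/s)$ terms are controlled at the right endpoint, and use the Gaussian maximal inequality to get, for each block, a bound of the form (polynomial in the block index)$\times e^{-c\epsilon\sqrt{2^k\log T}}\times (2^k/T)$; summing the geometric-type series over $k$ from $0$ to $\log_2 T$ and using $T\epsilon^2\geq e^2$ to absorb lower-order terms yields the stated $\frac{30e\sqrt{\log(\epsilon^2 T)}}{\epsilon^2 T} + \frac{16e}{\epsilon^2 T}$.

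\textbf{Main obstacle.} The conceptual steps are standard (peeling plus Gaussian maximal inequalities), so the real difficulty is uniform across all three parts: controlling the \emph{explicit numerical constants} ($9$, the $2$'s, $30e$, $16e$) and the precise logarithmic lower-order terms. This requires choosing the peeling base, the maximal inequality, and the integral comparisons so that the accumulated slack stays within the stated budget — in particular in (b) where matching the constant $2$ in front of $\log(T\Delta^2/2)$ exactly (rather than $2+o(1)$) forces the split point and tail estimate to be done without wasteful unions.
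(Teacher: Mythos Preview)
Your plan for (a) is essentially the paper's (peeling plus a maximal inequality; the paper peels over blocks $[kt,(k+1)t]$ rather than dyadic blocks, but either grid works for the constant $9$). For (b) your outline is correct in spirit, but the paper avoids your messy expansion of $nx^2/2$ by a cleaner trick: letting $\nu$ solve $\sqrt{(2/\nu)\log(T/\nu)}=\Delta$, one has $\sqrt{(2/n)\log(T/n)}\leq\Delta\sqrt{\nu/n}$ for all $n\geq\nu$, so the tail is bounded by $\exp\!\bigl(-\tfrac{\Delta^2}{2}(\sqrt n-\sqrt\nu)^2\bigr)$ and the sum becomes a Gaussian integral after $y=\sqrt x-\sqrt\nu$. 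Your expansion $e^{-nx^2/2}=(n/T)e^{-n\Delta^2/2}e^{\Delta\sqrt{2n\log(T/n)}}$ is correct but the last factor \emph{grows}, and it is far from clear your ``careful bookkeeping'' lands on the exact constant $2$; the paper's bound sidesteps this entirely.

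The genuine gap is in (c). With \emph{dyadic} peeling the argument cannot reach the claimed $1/(\epsilon^2 T)$ rate, and your displayed block bound ``$\times(2^k/T)$'' is wrong. On a block $[2^k,2^{k+1})$ you lower-bound the threshold by $a_k=2^k\epsilon+\sqrt{2\cdot 2^k\log(T/2^{k+1})}$ and apply the reflection/maximal inequality up to time $2^{k+1}$; the exponent is $a_k^2/(2\cdot 2^{k+1})$, whose logarithmic piece equals $\tfrac{2\cdot 2^k\log(T/2^{k+1})}{2\cdot 2^{k+1}}=\tfrac12\log(T/2^{k+1})$. Each block therefore contributes a factor $(2^{k+1}/T)^{1/2}$, not $(2^k/T)$, and summing yields only $O\bigl(1/(\epsilon\sqrt T)\bigr)$. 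Multiplied by $T$ as in the proofs of Theorems~\ref{th:optimBAI:DeltaUnKnown}, \ref{th:GAUCB:Deltaknown} and \ref{th:ucb}, this blows up like $\sqrt T$ and destroys both the asymptotic and minimax claims. The paper's missing idea is to peel with a \emph{data-dependent} ratio $\eta=\log(\epsilon^2 T)/(\log(\epsilon^2 T)-1)\searrow 1$: then the logarithmic piece becomes $\tfrac1\eta\log(T/\eta^{k+1})$, so each block carries $(\eta^{k+1}/T)^{1/\eta}\leq e\,\eta^{k+1}/T$ (this is exactly inequality~\eqref{UsefulMaj}). The price of taking $\eta\to 1$ is many more blocks, which is precisely where the $\sqrt{\log(\epsilon^2 T)}$ factor and the constants $30e,16e$ come from, via a three-part split of the resulting sum.
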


The proof of Lemma \ref{lem:conc} follows from standard peeling techniques and inequalities for Gaussian sums of random
variables. So far we do not know if this statement holds for subgaussian random variables where slightly weaker results may be shown by adding $\log\log$ terms to the confidence interval, but unfortunately by doing this one sacrifices minimax optimality.

\begin{proof}[Proof of Lemma \ref{lem:conc}.(a)]
We use a standard peeling argument and the maximal inequality.
\begin{align*}
\P\left(\exists s \geq t : |\hat \mu_s| \geq \epsilon\right)
&\leq \sum_{k=1}^\infty \P\left(\exists s \in [kt, (k+1)t] : |\hat \mu_s| \geq \epsilon\right) \\
&\leq \sum_{k=1}^\infty \P\left(\exists s \leq (k+1)t : |s\hat \mu_s| \geq k t \epsilon\right) \\
& \leq \sum_{k=1}^{\infty} 2 \exp\left(- \frac{(kt\epsilon)^2}{2(k+1)t}\right) = \sum_{k=1}^{\infty} 2 \exp\left(- \frac{kt\epsilon^2}{2\left(1+1/k\right)}\right)\\
&\leq \sum_{k=1}^\infty 2\exp\left(-\frac{k t \epsilon^2}{4}\right) \\
\end{align*}
Therefore
\begin{align*}
\E[\tau] 
&\leq 1+\sum_{t=1}^\infty \P\left(\tau \geq t\right)
\leq 1 + \sum_{t=1}^\infty \min\set{1, 2\sum_{k=1}^\infty \exp\left(-\frac{kt \epsilon^2}{4}\right)} \\
&= 1 + \sum_{t=1}^\infty \min\set{1, \frac{2}{\exp\left(t\epsilon^2 / 4\right) - 1}} 
\leq 1 + 
    \int^\infty_0 \min\set{1, \frac{2}{\exp\left(t\epsilon^2/4\right) - 1}} dt \\
&\leq 1 + \frac{4\log(4)}{\epsilon^2} 
    + \int^\infty_{4/\epsilon^2 \log(4)} \frac{8}{3\exp\left(t\epsilon^2/4\right)} dt
= 1 + \frac{4\log(4)}{\epsilon^2} + \frac{8}{3 \epsilon^2}
\leq 1 + \frac{9}{\epsilon^2}.
\end{align*}
\end{proof}

\begin{proof}[Proof of Lemma \ref{lem:conc}.(b)]
Let $\nu$ be the solution of $\sqrt{2\log(T/n)/n}=\Delta$, that is $\nu=2W\big(\Delta^2T/2\big)/\Delta^2$.
Then 
\[\sum_{n=1}^T \P\left(\hat \mu_n + \sqrt{\frac{2}{n} \log\left(\frac{T}{n}\right)} \geq \Delta\right) 
\leq \nu + \sum_{n = \left\lceil\nu \right\rceil }^T \P\left(\hat \mu_n  \geq \Delta - \sqrt{\frac{2}{n} \log\left(\frac{T}{n}\right)}\right) \;.
\]
As for all $n\geq \nu$
\[\frac{2}{n}\log\frac{T}{n} \leq \frac{2}{\nu} \log\left(\frac{T}{\nu}\right) \frac{\nu}{n} = \Delta^2 \frac{\nu}{n}\;,\]

\begin{align*}
 \sum_{n = \left\lceil\nu \right\rceil }^T &\P\left(\hat \mu_n  \geq \Delta - \sqrt{\frac{2}{n} \log\left(\frac{T}{n}\right)}\right)
  \leq \sum_{n = \left\lceil\nu \right\rceil }^\infty  \P\left(\hat \mu_n  \geq \Delta \left( 1 - \sqrt{\frac{\nu}{n}} \right)\right)\\
   & \leq  \sum_{n = \left\lceil\nu \right\rceil }^\infty \exp\left( - \frac{n\Delta^2}{2} \left(1-\sqrt{\frac{\nu}{n}}\right)^2 \right)\\
 & =  \sum_{n = \left\lceil\nu \right\rceil }^\infty \exp\left( - \frac{\Delta^2}{2} \left(\sqrt{n}-\sqrt{\nu}\right)^2 \right)\\
 & \leq 1+\int_\nu^\infty \exp\left( - \frac{\Delta^2}{2} \left(\sqrt{x}-\sqrt{\nu}\right)^2 \right) dx\\
 & = 1+\frac{2}{\Delta} \int_0^\infty \left(\frac{y}{\Delta}+\sqrt{\nu}\right) \exp\left(-\frac{y^2}{2}\right) dy\\
 & = 1+ \frac{2}{\Delta^2} + \frac{\sqrt{2\pi\nu}}{\Delta}\;.
\end{align*}

Hence, as $\nu\leq 2\log(\Delta^2T/2)/\Delta^2$ whenever $T\Delta^2\geq 2$, 
\begin{align*}
\sum_{n=1}^T \P\left(\hat \mu_n + \sqrt{\frac{2}{n} \log\left(\frac{T}{n}\right)} \geq \Delta\right) 
&\leq \frac{2}{\Delta^2} \left( \log\left(\frac{T\Delta^2}{2}\right) + 1 + \sqrt{\pi\log\left(\frac{T\Delta^2}{2}\right)}\right) + 1\;.
\end{align*}
\end{proof}

\begin{proof}[Proof of Lemma \ref{lem:conc}.(c)]
Let $x > 0$ and $n \in \N$, then by the reflection principle (eg., \cite{MP10}) it holds that
\begin{align}
\P\left(\exists s \leq n : s \hat \mu_s + x \leq 0\right)
= 2\P\left( n \hat \mu_n + x \leq 0\right)
\leq 2\min\set{\frac{1}{x} \sqrt{\frac{n}{2\pi}},\, 1} \exp\left(-\frac{x^2}{2n}\right)
\label{eq:reflect}
\end{align}
We prepare to use the peeling technique with a carefully chosen grid. Let
\[
\eta = \frac{\log(\epsilon^2 T)}{\log(\epsilon^2 T) - 1} \ \ \ \ \ \text{and} \ \ \ \ \ 
G_k = [\eta^k, \eta^{k+1}[\,.
\]
As $T\epsilon^2 > e^2$, one has $\eta \in ] 1 , 2[$. Moreover, our choice of  $\eta$ leads to the following inequality, that will be useful in the sequel 
\begin{equation}
\forall x \geq \epsilon^{-2}, \ \ \ \left(x/T\right)^{\frac{1}{\eta}}\leq e \left(x/T\right) \label{UsefulMaj}
\end{equation}
Using a union bound and then Eq.\ (\ref{eq:reflect}), one can write 
\begin{align*}
&\P\left(\exists s \leq T : \hat \mu_s + \sqrt{\frac{2}{s} \log\left(\frac{T}{s}\right)} + \epsilon \leq 0\right) \\ 
&\leq \sum_{k=0}^\infty \P\left(\exists s \in G_k : s\hat \mu_s + \sqrt{2 \eta^k \log\left(1 \vee \frac{T}{\eta^{k+1}}\right)} + \eta^k \epsilon \leq 0\right) \\
&\leq \sum_{k=0}^\infty 
2\min\set{1,\, \finalchangesEmilie{\sqrt{\frac{\eta}{4\pi}}\frac{1}{\sqrt{\log\left(1\vee \frac{T}{\eta^{k+1}}\right)} + \epsilon\sqrt{\eta^k/2}}}}\left(\frac{\eta^{k+1}}{T}\right)^{\frac{1}{\eta}}
\exp\left(-\frac{\eta^{k-1} \epsilon^2}{2}\right) \\
&\leq \sum_{k=0}^\infty f(k) \leq 2 \max_k f(k) + \int^\infty_0 f(u) du\,,
\end{align*}

where the function $f$ is defined on $[0,+\infty[$ by 
\[f(u) = 2\min\set{1,\, \sqrt{\frac{\eta}{4\pi \log\left(\frac{T}{\eta^{u+1}}\right)}}}\left(\frac{\eta^{u+1}}{T}\right)^{\frac{1}{\eta}}
\exp\left(-\frac{\eta^{u-1} \epsilon^2}{2}\right),\]
\finalchangesEmilie{with the convention that $\sqrt{\frac{\eta}{4\pi \log\left(\frac{T}{\eta^{u+1}}\right)}} = +\infty$ for $u$ such that $T/\eta^{u+1} < 1$}.

The last inequality relies on the fact that $f$ can be checked to be unimodal, which permits to upper bound the sum with an integral.
The maximum of $f$ is easily upper bounded as follows, using notably \eqref{UsefulMaj}:
\begin{align*}
\max_k f(k) 
\leq 2\sup_{k \geq 0} \left(\frac{\eta^{k+1}}{T}\right)^{\frac{1}{\eta}} \exp\left(-\frac{\eta^{k-1} \epsilon^2}{2}\right) 
=2 \exp(-1/\eta)\left(\frac{2\eta }{\epsilon^2 T}\right)^{\frac{1}{\eta}}
\leq \frac{8e}{\epsilon^2 T}\,.
\end{align*}

The remainder of the proof is spent bounding the integral,
which will be split into three disjoint intervals
with boundaries at constants $k_1 < k_2$ given by
\begin{align*}
k_1 &= \log(1/\epsilon^2) / \log(\eta) & 
k_2 &= 1 + \frac{\log\left(\frac{-\log \log(\eta)}{\epsilon^2}\right)}{\log(\eta)}\,. 
\end{align*}
These are chosen such that
\begin{align*}
\eta^{k_1} &= \epsilon^{-2} &
\exp\left(-\frac{\eta^{k_2 - 1}\epsilon^2}{2}\right) &= \sqrt{\log(\eta)}\,.
\end{align*}

First, one has
\begin{align*}
I_1
&:=\int^{k_1}_0 f(u) du 
\leq\int^{k_1}_0 \frac{\sqrt{2}}{\sqrt{\pi \log\left(\frac{T}{\eta^{u+1}}\right)}} \left(\frac{\eta^{u+1}}{T}\right)^{\frac{1}{\eta}} \exp\left(-\frac{\eta^{u-1} \epsilon^2}{2}\right) du \\
&\leq \frac{\sqrt{2}}{\sqrt{\pi \log\left(\frac{T}{\eta^{k_1+1}}\right)}}   \int^{k_1}_0 \left(\frac{\eta^{u+1}}{T}\right)^{\frac{1}{\eta}} du 
\leq \frac{\sqrt{2}}{\sqrt{\pi \log\left(\frac{\epsilon^2 T}{\eta}\right)} }\frac{\eta}{\log(\eta)} \left(\frac{\eta}{\epsilon^2 T}\right)^{\frac{1}{\eta}} \\
&\leq \frac{e\sqrt{2}  \eta^2}{\sqrt{\pi \log\left(\frac{\epsilon^2 T}{\eta}\right)}\log(\eta)\epsilon^2T} \leq \frac{e\sqrt{2}  \eta^2}{\sqrt{\pi \log\left(\frac{\epsilon^2 T}{\eta}\right)}} \frac{\log(\epsilon^2T)}{\epsilon^2T}, 
\end{align*}
where the last inequality follows from the fact that 
$(\log(\eta))^{-1} \leq {\eta}/({\eta - 1}) = \log\left(\epsilon^2 T\right).$

Secondly, 
\begin{align*}
I_2
&:=\int^{k_2}_{k_1} f(u) du 
\leq\int^{k_2}_{k_1} \frac{\sqrt{2}}{\sqrt{\pi \log\left(\frac{T}{\eta^{u+1}}\right)}} \left(\frac{\eta^{u+1}}{T}\right)^{\frac{1}{\eta}} \exp\left(-\frac{\eta^{u-1} \epsilon^2}{2}\right) du \\
&\leq \frac{e\sqrt{2}}{\sqrt{\pi \log\left(\frac{T}{\eta^{k_2+1}}\right)}} \int^{\infty}_{k_1} \frac{\eta^{u+1}}{T} \exp\left(-\frac{\eta^{u-1} \epsilon^2}{2}\right) du 
= \frac{e\sqrt{2}}{\sqrt{\pi \log\left(\frac{T}{\eta^{k_2+1}}\right)}} \frac{2\eta^2 \exp\left(-\frac{\eta^{k_1 - 1} \epsilon^2}{2} \right)}{\epsilon^2 T \log(\eta)} \\
&\leq \frac{2e\sqrt{2}}{\sqrt{\pi \log\left(\frac{T}{\eta^{k_2+1}}\right)}} \frac{\eta^2}{\epsilon^2 T \log(\eta)} 
\leq \frac{2e\sqrt{2}\eta^2}{\sqrt{\pi \log\left(\frac{\epsilon^2 T}{-\eta^2 \log \log(\eta)}\right)}} \frac{\log(\epsilon^2T)}{\epsilon^2 T} \\
&\leq
\frac{2e\sqrt{2}\eta^2}{\sqrt{\pi \log\left(\frac{\epsilon^2 T}{\eta^2 \log \log(\epsilon^2T)}\right)}} \frac{\log(\epsilon^2T)}{\epsilon^2 T}
\end{align*}
The second inequality follows from \eqref{UsefulMaj}, since by definition of $k_1$, one has $\eta^{u+1} \geq \epsilon^{-2}$ for $u \geq k_1$, whereas the last inequalities use again that $(\log(\eta))^{-1}\leq \log (\epsilon^2 T)$. Using similar arguments for the third term, one has
\begin{align*}
I_3 
&:= \int^\infty_{k_2} f(u) du 
\leq 2\int^{\infty}_{k_2} \left(\frac{\eta^{u+1}}{T}\right)^{\frac{1}{\eta}} \exp\left(-\frac{\eta^{u-1} \epsilon^2}{2}\right) du \\
&\leq 2e \int^{\infty}_{k_2} \frac{\eta^{u+1}}{T} \exp\left(-\frac{\eta^{u-1} \epsilon^2}{2}\right) du 
= \frac{4e\eta^2 \exp\left(-\frac{\eta^{k_2 - 1} \epsilon^2}{2}\right)}{\epsilon^2 T \log(\eta)}\\
&= \frac{4e\eta^2}{\epsilon^2 T\sqrt{\log(\eta)}} \leq  \frac{4e\eta^2}{\epsilon^2 T}\sqrt{\log(\epsilon^2 T)}
\end{align*}

Combining the three upper bounds yield
\begin{align*}
\int^\infty_0 f(u) du 
&\leq I_1 + I_2 + I_3 \\ 
&\leq \frac{e \eta^2 \log(\epsilon^2 T)}{\epsilon^2 T} \left(
  \frac{\sqrt{2}}{\sqrt{\pi \log\left(\frac{\epsilon^2 T}{\eta}\right)}} 
  + \frac{2\sqrt{2}}{\sqrt{\pi \log\left(\frac{\epsilon^2 T}{\eta^2 \log \log(\epsilon^2 T)}\right)}} 
  + \frac{4}{\sqrt{\log(\epsilon^2 T)}}
  \right) \\ 
  &\leq \frac{4e  \log(\epsilon^2 T)}{\epsilon^2 T} \left(
  \frac{\sqrt{2}}{\sqrt{\pi \log\left(\frac{\epsilon^2 T}{2}\right)}} 
  + \frac{2\sqrt{2}}{\sqrt{\pi \log\left(\frac{\epsilon^2 T}{4 \log \log(\epsilon^2 T)}\right)}} 
  + \frac{4}{\sqrt{\log(\epsilon^2 T)}} 
  \right) 
\end{align*}

It can be shown, using notably the inequality $\log u \leq u/e$, that for all $\epsilon^2 T \geq e^{2}$,  
\begin{eqnarray*}
\log(\epsilon^2 T / 2) & \geq & (1/2)\log(\epsilon^2 T) \\
\log \left(\frac{\epsilon^2 T}{4 \log \log(\epsilon^2 T)}\right)& \geq & \left(1-\frac{4}{e^2}\right)\log(\epsilon^2T)
\end{eqnarray*}
and one obtains 
\[\int^\infty_0 f(u) du \leq \frac{4e  \log(\epsilon^2 T)}{\epsilon^2 T}\times \frac{2 + 2e\sqrt{2}/(\sqrt{e^2-4}) + 4\sqrt{\pi}}{\sqrt{\pi \log(\epsilon^2 T)}} \leq \frac{30e}{\epsilon^2 T }\sqrt{ \log(\epsilon^2 T)}.
\]

Finally,
\begin{align*}
\P\left(\exists s \leq T : \hat \mu_s + \sqrt{\frac{2}{s} \log\left(\frac{T}{s}\right)} + \epsilon \leq 0\right) 
&\leq \int_{0}^{\infty}f(u)du + 2\max_k f(k) \leq \frac{30e}{\epsilon^2 T}\sqrt{ \log(\epsilon^2 T)} + \frac{16e}{\epsilon^2 T}. 
\end{align*}
\end{proof}

\end{document}


\section{Graphical interpretation of the policies}\label{sec:figures}

\todot{I don't really understand these figures, I think this might need some more ``blah blah''}

\begin{figure}
\centering
\begin{minipage}{0.6\textwidth}
\includegraphics[width=\textwidth]{complexity_FBETC_DeltaKnown.pdf}
\includegraphics[width=\textwidth]{complexity_ETC_DeltaKnown.pdf}
\includegraphics[width=\textwidth]{complexity_GAUCB.pdf}
\end{minipage}
\caption{Fixed-Budget ETC algorithm, like the BAI-ETC algorithm, need to sample from both arms $2$ until the difference is positive with high probability (top). 
The SPRT algorithm waits for the difference to be reach a level whose opposite is reached only with probability smaller than $1/(T\Delta^2)$. 
The GA-UCB algorithm needs to sample from arm 2 until $n$ is so large that the upper confidence bound 
$Y_1+\dots+Y_n + \sqrt{2n\log(T)}$ becomes smaller than $n\Delta$.}
\label{fig:my_label}
\end{figure}